\theoremstyle{plain}
\newtheorem{theorem}{Theorem}[section]
\newtheorem{lemma}[theorem]{Lemma}
\newtheorem{corollary}[theorem]{Corollary}
\theoremstyle{definition}
\newtheorem{definition}[theorem]{Definition}
\theoremstyle{remark}
\begin{document}

\title[Equivalence of higher $\rho$ invariants]{Equivalence of different definitions of higher $\rho$ invariants }

\author[H. Liu]{Hongzhi Liu}
\address{H. Liu, School of Mathematics, Shanghai University of Finance and Economics,  Shanghai, 200433, P. R. China.  e-mail: liu.hongzhi@sufe.edu.cn}
\author[Z. Xie]{Zhizhang Xie}
\address{Z. Xie, Department of Mathematics, Texas A\&M University, College Station, TX, 77843, USA.  e-mail: xie@tamu.edu}
\author[G. Yu]{Guoliang Yu}
\address{G. Yu, Department of Mathematics, Texas A\&M University, College Station, TX, 77843, USA.  e-mail: guoliangyu@math.tamu.edu}

\maketitle

\bibliographystyle{abbrv}

\begin{abstract}
For each orientation-preserving homotopy equivalence between two closed oriented smooth  manifolds, there are mainly two different approaches to the higher $\rho$ invariant associated to this homotopy equivalence. In this article, we show that these two definitions of the higher $\rho$ invariant are equivalent. \end{abstract}

\section{Introduction}\label{sec introduction}

The Fredhoml index of elliptic differential operators on closed smooth manifolds is a fundamental invariant in mathematics. When the underlying closed manifold has nontrivial fundamental group, there is a natural higher index, a vast generalization of the Fredholm index, that lies in the $K$-theory of the group $C^\ast$-algebra of the fundamental group. If the higher index of an elliptic operator vanishes, then any specific trivialization of the higher index induces a secondary invariant, called the higher $\rho$ invariant of this trivialization \cite{HR053MappingsurgerytoanalysisIIIExactsequences, Roeindexthecoarsegeoandtopofmfld}. The higher $\rho$ invariant is an obstruction to the inverse of an invertible elliptic differential operator being local \cite{chenliuyu}.  Higher $\rho$ invariants can be used to detect various geometric information of the underlying space. For example, let $M$ be a closed smooth spin manifold, and $\widetilde M$ be the universal covering space of $M$. If $M$ admits a Riemannian metric of positive scalar curvature, then it follows from the Lichnerowicz formula that the associated Dirac operator is invertible. In other words, the positivity of the scalar curvature  provides a trivialization of the higher index of the Dirac operator, hence determines a higher $\rho$ invariant of the associated Dirac operator. This particular higher $\rho$ invariant may be used to measure the size of the moduli space of positive scalar curvature metrics \cite{Xieyuhihgerrhoandmodulispace}. For a brief survey on the higher $\rho$ invariant and other applications, we refer the reader to \cite{XieYusurvey}.

In this paper, we shall mainly focus on higher $\rho$ invariants that arise from orientation-preserving homotopy equivalences of oriented closed manifolds. More precisely,  let $f: M\to N$ be an orientation preserving homotopy equivalence between two closed oriented smooth manifolds $M$ and $N$. Let $\widetilde M$ (resp. $\widetilde N$) be the  universal covering space of $M$ (resp. $N$), and $\Gamma$  the fundamental group of $M$ and $N$. Let $D_{\widetilde M}$ (resp. $D_{\widetilde N}$) be the signature operator on $\widetilde M$ (resp. $\widetilde N$). Then the higher index of the signature operator
\[D=
\begin{pmatrix}
D_{\widetilde N} & \\
 & -D_{\widetilde M}
\end{pmatrix}
\]
on the disjoint union $N\sqcup (-M)$ is trivial with the trivialization given by $f$. Hence one can define the higher $\rho$ invariant of  $D$ associated to $f$. This higher $\rho$ invariant detects whether the homotopy equivalence $f$ can be deformed into a homeomorphism. More precisely, if the higher $\rho$ invariant of $f$ is not zero, then $f$ cannot be deformed into a homeomorphism. 


In the literature, there are mainly two  different approaches to the higher $\rho$ invariant associated to homotopy equivalences for the signature operator. The main purpose of the present notes is to give more or less self-contained proof for the equivalence of the two different approaches. 

Let us first briefly recall the two different approaches to the higher $\rho$ invariant. 
\begin{enumerate}
	\item The first approach follows the work of Hilsum and Skandalis \cite{HS92Invarianceparhomotopiedelasignatureacoefficientsdansunfibrepresqueplat}. More precisely,  by following  the work of Hilsum and Skandalis and Wahl  \cite{W13Higherrho-invariantsandthesurgerystructureset}, Piazza and Schick defined a higher $\rho$ invariant  associated to $f$, which lies in the $K$-theory of $D^*(\widetilde{N})^{\Gamma}$ \cite{PS16ThesurgeryexactsequenceK-theoryandthesignatureoperator}.  Here $D^*(\widetilde{N})^{\Gamma}$ is   a geometric $C^*$-algebra whose $K$-theory serves as the receptacle of secondary invariants.
	\item The second approach was given by Higson and Roe. By building upon the work of  Novikov \cite{MR0292913}, Kasparov \cite{MR804752} and Mischenko \cite{MR0268898,MR0362407},  Higson and Roe developed the theory of analytically controlled Hilbert Poincar\'e complexes over $C^\ast$-algebras \cite{HR051MappingsurgerytoanalysisIAnalyticsignatures, HR052MappingsurgerytoanalysisIIGeometricsignatures, HR053MappingsurgerytoanalysisIIIExactsequences}. They defined  a higher $\rho$ invariant  associated to $f$ in this framework, which applies to both the smooth and PL setting. Furthermore,  Higson and Roe constructed a natural transformation (after inverting $2$) from the Browder–Novikov–Sullivan–Wall surgery exact sequence of a closed smooth manifold to a certain exact sequence of $C^\ast$-algebra K-theory groups.  
	This approach was later generalized by Weinberger, Xie and Yu to the Browder–Novikov–Sullivan–Wall surgery exact sequence of a closed topological manifold \cite{WXY18Additivityofhigherrhoinvariantsandnonrigidityoftopologicalmanifolds}. In particular, by combining this approach with a novel description of the topological structure group that was developed in the same paper \cite{WXY18Additivityofhigherrhoinvariantsandnonrigidityoftopologicalmanifolds},  Weinberger, Xie and Yu  proved that the higher $\rho$ invariant defines a group homomorphism from the topological structure group of a given closed topological manifold $N$ to  $K_*(C^*_{L,0}(\widetilde{N})^\Gamma)$, thus answered positively the corresponding long standing open question \cite{WXY18Additivityofhigherrhoinvariantsandnonrigidityoftopologicalmanifolds}. Here $C^*_{L,0}(\widetilde{N})^\Gamma$ is   a geometric $C^*$-algebra whose $K$-theory serves as the receptacle of secondary invariants. In fact, we have $K_{\ast}(C^*_{L,0}(\widetilde{N})^\Gamma) \cong K_{\ast+1}(D^*(\widetilde{N})^{\Gamma})$ (cf. \cite[Section 6]{XY14Positivescalarcurvaturehigherrhoinvariantsandlocalizationalgebras}).  
\end{enumerate}

The approach of Higson and Roe has two different versions: one is bounded and the other is unbounded. Our strategy for proving the equivalence of the above two definitions of the higher $\rho$ invariant is to first show the bound and unbounded versions of the Higson-Roe approach are equivalent, and then show the unbounded version of the Higson-Roe approach is equivalent to the Hilsum-Skandalis approach. 

As usual, the odd dimensional case is completely parallel to the even dimensional case. Alternatively, if we invert $2$, the odd dimensional case reduces to the even dimensional case by taking direct product with the circle. In any case,   without loss of generality, we shall mainly focus on the even dimensional case throughout the article.

The paper is organized as follows. In Section \ref{sec K theory}, we review some geometric $C^*$-algebras and their $K$-theory. In Section \ref{sec Poincare complex}, we review the notion of Hilbert Poincar\'e complexes. In Section \ref{sec K homology}, we introduce constructions of the $K$-homology
class of the signature operator of two particular Hilbert Poincar\'e complexes and the $K$-homology
class of the signature operator on a manifold, and show that they are equivalent to each other. In the last section, we briefly recall different constructions of the higher $\rho$  invariant associated to the homotopy equivalence,  and show that  they are equivalent to each other.

\section{Geometric $C^*$-algebras}\label{sec K theory}

In this section, we recall the definitions of some geometric $C^*$-algebras. Let $N$ be a finite dimensional compact manifold, with fundamental group $\Gamma$.  Then $\Gamma$ acts on $\widetilde{N}$, the universal covering space of $N$, cocompactly and freely. A $\Gamma$-equivariant $\widetilde{N}$-module, denoted as $H_{\widetilde{N}}$, is a separable Hilbert space equipped with a $*$ representation of $C_0(\widetilde{N})$, and a unitary representation of $\Gamma$ on $H_{\widetilde{N}}$, which is covariant to the representation of $C_0(\widetilde{N})$. A $\Gamma$-equivariant $\widetilde{N}$-module is called nondegenerate if the $*$ representation is nondegenerate.  A $\Gamma$-equivariant $\widetilde{N}$-module is called standard if no nonzero function in $C_0(\widetilde{N})$ acts as a compact operator.

\begin{definition}
let $H_{\widetilde{N}}$ be a $\Gamma$-equivariant $\widetilde{N}$-module and $T\in B(H_{\widetilde{N}}) $ be a bounded linear operator.
\begin{itemize}
	\item The propagation of $T$ is defined to be
	\[\sup{\{d(x,y)| (x,y)\in \text{Supp}(T)\}} ,\]
	where $\text{Supp}(T)$ is the complement in $\widetilde{N}\times \widetilde{N}$ of the set of points $(x,y)\in \widetilde{N}\times \widetilde{N}$ for which there exist $f,g\in C_0(\widetilde{N})$ such that $gTf=0$ and $f(x)\neq 0,\ g(y)\neq 0$. $T$ is said to be of finite propagation if its propagation is finite.
	\item $T$ is said to be locally compact if $fT$ and $Tf$ are compact for all $f\in C_0(\widetilde{N})$.
	\item $T$ is said to be pseudo-local if $[T,f]$ is compact for all $f\in C_0(\widetilde{N})$.
\end{itemize}
\end{definition}

\begin{definition}
	Let $H_{\widetilde{N}}$ be a standard nondegenerate $\Gamma$-equivariant $\widetilde{N}$-module.
\begin{itemize}
		\item The $\Gamma$-equivariant Roe algebra $C^*(\widetilde{N})^G$ is the $C^*$-algebra generated by locally compact $\Gamma$-invariant operators with finite propagation.
		\item The so called Roe's structure algebra, $D^*(\widetilde{N})^\Gamma$ is the $C^*$-algebra generated by pseudo-local $\Gamma$-invariant operators with finite propagation. In particular, the $\Gamma$-equivariant Roe algebra $C^*(\widetilde{N})^G$ is an ideal of $D^*(\widetilde{N})^\Gamma$.
		\item The $\Gamma$-equivariant localization algebra $C^*_L(\widetilde{N})^\Gamma$ is the $C^*$-algebra generated by all of the bounded, uniformly norm-continuous functions $f:[0,\infty)\to C^*(\widetilde{N})^G$ such that
		\[ \lim_{t\to \infty}\text{propagation of }f(t)=0. \]

		\item The kernel of the evaluation map
		\begin{eqnarray*}
			\text{ev}: C^*_L(\widetilde{N})^\Gamma &\to & C^*(\widetilde{N})^\Gamma\\
			f   & \to & f(0)
		\end{eqnarray*}
		is defined to be the $\Gamma$-equivariant obstruction algebra $C^*_{L,0}(\widetilde{N})^\Gamma$.
	\end{itemize}
\end{definition}

As shown in \cite{ XY14Positivescalarcurvaturehigherrhoinvariantsandlocalizationalgebras, Y97LocalizationalgebrasandthecoarseBaum-Connesconjecture},  the localized index map
\[\text{Ind}_L: K_1(D^*(\widetilde{N})^\Gamma) \cong K_0(C^*_{L,0}(\widetilde{N})^G)\]
is an isomorphism. Roughly speaking, the $\text{Ind}_L$ map is given by applying index formula to
 paths localizing representatives of classes in $K_1(D^*(\widetilde{N})^\Gamma)$. In this paper, we will use Roe's index formula (\cite{R93CoarsecohomologyandindextheoryoncompleteRiemannianmanifolds}) to define the  $\text{Ind}_L$ map.
 We will not recall the definition of $\text{Ind}_L$ in detail, but only describe the image of some classes in $K_1(D^*(\widetilde{N})^\Gamma)$ under $\text{Ind}_L$ in Subsection \ref{sec def of rho op}.

\section{Hilbert Poincar\'e complexes}\label{sec Poincare complex}
In this section, we recall the definitions of bounded and  unbounded analytically controlled Hilbert Poicnar\'e complexes, the definitions and the homotopy invariance of the signature classes of them.  \cite{HR051MappingsurgerytoanalysisIAnalyticsignatures, HR052MappingsurgerytoanalysisIIGeometricsignatures, HR053MappingsurgerytoanalysisIIIExactsequences}.

\subsection{Analytically controlled Hilbert Poincar\'e complex}

In this subsection, we recall the definition of the analytically controlled Hilbert Poincar\'e complex. We first introduce the definition of the analytically controlled self-adjoint operator.

 \begin{definition}\label{def analytically controlled map bounded}
 A self-adjoint  $\Gamma$-equivariant operator $T$ on $H_{\widetilde{N}}$ (bounded or unbounded) is called  analytically controlled over $\widetilde{N}$ if
 \begin{enumerate}
\item the resolvent operator $(T\pm i)^{-1}$ belongs to $ C^*(\widetilde{N})^\Gamma $,
\item the operator $T(1+T^2)^{-1}$ belongs to $ D^*(\widetilde{N})^\Gamma $.
\end{enumerate}
 \end{definition}

Now we define the analytically controlled complex.
\begin{definition}
An $n$-dimensional $\Gamma$-equivariant
$\widetilde{N}$-complex
\[(H_{\widetilde{N}}, d): H_{0, \widetilde{N}}\stackrel{d}{\rightarrow }  H_{1, \widetilde{N}}\stackrel{d}{\rightarrow } \dots \stackrel{d}{\rightarrow } H_{n-1, \widetilde{N}}\stackrel{d}{\rightarrow } H_{n, \widetilde{N}}\]
consists of $\Gamma$-equivariant  $\widetilde{N}$-modules $H_{*, \widetilde{N}}$ and  $\Gamma$-equivariant differential maps $d$.
It is said to be bounded if all of the differential maps $d$ are bounded operators, unbounded if all of $d$ are unbounded. We say that the complex $(H_{\widetilde{N}}, d)$ is analytically controlled over $\widetilde{N}$ if the operator $D: d+d^*$ is analytically controlled.
\end{definition}

The following is the definition of the analytically controlled  chain homotopy equivalence between analytically controlled Hilbert complexes.

\begin{definition}\label{def ana control homotopy of hil complex bounded}
A chain homotopy equivalence
\[
A: (H_{*, \widetilde{N}}, d) \to (H'_{*,\widetilde{N}}, d')
\]
 between $\Gamma$-equivariant analytically controlled Hilbert complexes over $\widetilde{N}$, is said to be $\Gamma$-equivariant analytically controlled over $\widetilde{N}$ if
\begin{enumerate}
\item all of $A$ belong to $D^*(\widetilde{N})^\Gamma$,
\item there exist chain maps
\[B: (H'_{*, \widetilde{N}}, d') \to (H_{*,\widetilde{N}}, d), B\in D^*(\widetilde{N})^\Gamma, \]
and operators $y, \ y'\in D^*(\widetilde{N})^\Gamma$ with degree $-1$, i.e.
\[
y: H_{i, \widetilde{N}} \to H_{i-1, \widetilde{N}}, \ y': H'_{i, \widetilde{N}} \to H'_{i-1, \widetilde{N}},
\]
such that
    \[
    I-AB= d'y'+y'd', I-BA = dy+yd.
    \]
\end{enumerate}
\end{definition}

The analytically controlled Hilbert Poincar\'e complex is an analytically controlled Hilbert complex equipped with the Hilbert Poincar\'e duality.

\begin{definition}\label{def hilbert Poincare complex bounded}
 A $\Gamma$-equivariant analytically controlled Hilbert Poincar\'e complex is a $\Gamma$-equivariant analytically controlled Hilbert complex
 \[
 (H_{\widetilde{N}}, d): H_{0, \widetilde{N}}\stackrel{d}{\rightarrow }  H_{1, \widetilde{N}}\stackrel{d}{\rightarrow } \dots \stackrel{d}{\rightarrow } H_{n-1, \widetilde{N}}\stackrel{d}{\rightarrow } H_{n, \widetilde{N}},\]
 equipped  with self-adjoint bounded operators $S$ such that
 \begin{enumerate}
 \item $S: H_{i, \widetilde{N}}\to H_{n-i, \widetilde{N}}$,
   \item $Sd^* + dS=0$,
   \item $S$ is a $\Gamma$-equivariant analytically controlled chain homotopy equivalence over $\widetilde{N}$ from the dual complex
       \[(H_{\widetilde{N}}, d^*): H_{n, \widetilde{N}}\stackrel{d^*}{\rightarrow }  H_{n-1, \widetilde{N}}\stackrel{d^*}{\rightarrow } \dots \stackrel{d^*}{\rightarrow } H_{1, \widetilde{N}}\stackrel{d^*}{\rightarrow } H_{0, \widetilde{N}} \]
       to $(H_{\widetilde{N}}, d)$.
 \end{enumerate}
\end{definition}

We mention that to make $S$ to be a genuine chain map,  one need  to employ signs such as
\[
\mathcal{S}(v)=(-1)^p S(v), \forall v\in H_{p,\widetilde{N}}.
\]
However, for the sake of conciseness, we leave it as is, the reader should not be confused. 

Correspondingly, we have the following notion of the analytically controlled homotopy equivalence between Hilbert Poincar\'e complexes.

\begin{definition}\label{def ana control homotopy of hil poinc complex bounded}
Let $(H_{\widetilde{N}}, d, S) $ and $(H'_{\widetilde{N}}, d', S')$ be two analytically controlled Hilbert Poincar\'e complexes over $\widetilde{N}$.
Let
\[
A: (H_{\widetilde{N}}, d) \to (H'_{\widetilde{N}}, d')
\]
be a  $\Gamma$-equivariant analytically controlled chain homotopy equivalence. Then the homotopy equivalence  $A$ is said to be a $\Gamma$-equivariant analytically controlled chain homotopy equivalence between  $(H_{\widetilde{N}}, d, S) $ and $(H'_{\widetilde{N}}, d', S')$, if there exist operators $y: H_{p,\widetilde{N}} \to H_{n-1-p\widetilde{N}}$, $y\in D^*(\widetilde{N})^\Gamma$, such that
\[
(-i)^{p(p-1)+l}(AS'A^*-S)(v)= dy  + yd^*(v), \forall v\in  H_{p,\widetilde{N}},
\]
where $l=\frac{n}{2}$ if $n$ is even, and $l=\frac{n-1}{2}$ if $n$ is odd. 
\end{definition}

\subsection{The signature class}

In this subsection, we define the signature class associated to a Hilbert Poincar\'e complex.

For both of the bounded and the unbounded $\Gamma$-equivariant Hilbert Poincar\'e complexes analytically controlled over $\widetilde N$, their signature classes live in $K_*(C^*(\widetilde{N})^G )$.

Let $(H_{\widetilde{N}}, d, S)$ be a $\Gamma$-equivariant analytically controlled Hilbert Poincar\'e complex (bounded or unbounded) over $\widetilde{N}$. Let $D$ be $d+d^*.$  In \cite{HR051MappingsurgerytoanalysisIAnalyticsignatures}, it was shown that operators
\[D+ S,\ D-S\]
are both invertible.

When $(H_{\widetilde{N}}, d, S)$ is odd dimensional, by definition, one can see that the invertible operator
\[
(D+S)(D-S)^{-1}=I+2S(D-S)^{-1}: \oplus_{k} H_{2k, \widetilde{N}}\to \oplus_{k} H_{2k, \widetilde{N}}
\]
belongs to $C^*(\widetilde{N})^{\Gamma,+}$, hence defines a class in $K_1(C^*(\widetilde{N})^\Gamma )$.

Higson and Roe proved the following lemma (\cite[Lemma 5.7]{HR051MappingsurgerytoanalysisIAnalyticsignatures}).
\begin{lemma}\label{lemma odd functional cal for perturb}
Let $i=\sqrt{-1}$, the operators $i\pm (D\pm S)$ are invertible, and $(i\pm (D\pm S))^{-1}$ belong to $C^*(\widetilde{N})^\Gamma$.
\end{lemma}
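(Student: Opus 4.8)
The plan is to deduce everything from the already-established invertibility of $D+S$ and $D-S$ together with standard functional calculus inside the $C^*$-algebra $D^*(\widetilde N)^\Gamma$, keeping careful track of which operators land in the ideal $C^*(\widetilde N)^\Gamma$. First I would record the basic structural facts: since $(H_{\widetilde N},d,S)$ is analytically controlled, the operator $D=d+d^*$ is analytically controlled, so $D(1+D^2)^{-1}\in D^*(\widetilde N)^\Gamma$ and $(D\pm i)^{-1}\in C^*(\widetilde N)^\Gamma$; moreover $S$ is bounded, self-adjoint, $\Gamma$-equivariant and lies in $D^*(\widetilde N)^\Gamma$. Hence $D\pm S$ is a self-adjoint operator whose bounded transform $(D\pm S)(1+(D\pm S)^2)^{-1}$ differs from $D(1+D^2)^{-1}$ by an element of $D^*(\widetilde N)^\Gamma$ (one checks this by a resolvent identity, using that $S$ is bounded), so $D\pm S$ is again analytically controlled. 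In particular $D\pm S$ is an (unbounded if the complex is unbounded, bounded otherwise) self-adjoint operator, and we already know it is invertible.

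Next I would invoke continuous functional calculus for the self-adjoint operator $A_\pm := D\pm S$. The function $g_\pm(x) = (i \pm x)^{-1}$ is continuous and bounded on $\R$ (the denominator never vanishes since $i\pm x$ has nonzero imaginary part), so $g_\pm(A_\pm) = (i\pm A_\pm)^{-1}$ is a well-defined bounded operator, which gives invertibility of $i\pm (D\pm S)$ immediately. The content is then the membership $(i\pm(D\pm S))^{-1}\in C^*(\widetilde N)^\Gamma$. For this I would use that $C^*(\widetilde N)^\Gamma$ is an ideal in $D^*(\widetilde N)^\Gamma$ and that the resolvent $(A_\pm \pm i)^{-1}$ of an analytically controlled operator lies in $C^*(\widetilde N)^\Gamma$ by Definition \ref{def analytically controlled map bounded}(1). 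Concretely, $(i\pm(D\pm S))^{-1}$ is, up to a sign and relabeling, exactly the resolvent $(A_\pm \pm i)^{-1}$ of the analytically controlled self-adjoint operator $A_\pm$, and the four sign choices are handled identically; for the remaining sign combinations one writes $(i - A_\pm)^{-1} = -\,(A_\pm - i)^{-1}$ and similarly, so in all cases the operator in question equals $\pm$ the resolvent of $A_\pm$ at $\pm i$, which lies in $C^*(\widetilde N)^\Gamma$.

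The one point that needs a genuine argument — and which I expect to be the main obstacle — is verifying that $D\pm S$ is itself analytically controlled, i.e. that bounded perturbation by the element $S\in D^*(\widetilde N)^\Gamma$ preserves the two conditions of Definition \ref{def analytically controlled map bounded}. For condition (1), the second resolvent identity gives
\[
(D+S\pm i)^{-1} - (D\pm i)^{-1} = -(D+S\pm i)^{-1}\,S\,(D\pm i)^{-1},
\]
and once one knows the left-hand side's first term is bounded (which follows from self-adjointness of $D+S$, valid because $S$ is bounded and symmetric so the perturbation does not change the domain), the right-hand side is a product of a bounded operator, $S\in D^*(\widetilde N)^\Gamma$, and $(D\pm i)^{-1}\in C^*(\widetilde N)^\Gamma$, hence lies in the ideal $C^*(\widetilde N)^\Gamma$; adding back $(D\pm i)^{-1}$ keeps it in $C^*(\widetilde N)^\Gamma$. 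For condition (2) one argues similarly with the bounded transform, or one simply observes that since $D\pm S$ is already known to be invertible, $(D\pm S)(1+(D\pm S)^2)^{-1}$ is a norm-limit of polynomials in the resolvents $(D\pm S\pm i)^{-1}$ together with $S$ and bounded functions of $D\pm S$, all of which lie in $D^*(\widetilde N)^\Gamma$. With $D\pm S$ shown to be analytically controlled, the lemma follows from functional calculus and the ideal property as above. Alternatively, since this is stated as a result of Higson and Roe, one may simply cite \cite[Lemma 5.7]{HR051MappingsurgerytoanalysisIAnalyticsignatures}, but the argument just sketched makes the statement self-contained in the present $\Gamma$-equivariant, analytically controlled setting.
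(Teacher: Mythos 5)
The part of your argument that establishes invertibility is fine: $D\pm S$ is self-adjoint on the domain of $D$ (bounded symmetric perturbation), so $i\pm(D\pm S)$ is automatically invertible, and the four sign choices reduce to the two resolvents $(D\pm S\mp i)^{-1}$ up to sign. The genuine gap is in the membership claim. When you write the second resolvent identity
\[
(D+S\pm i)^{-1}-(D\pm i)^{-1}=-(D+S\pm i)^{-1}\,S\,(D\pm i)^{-1}
\]
and conclude that the right-hand side ``lies in the ideal $C^*(\widetilde N)^\Gamma$,'' you are using that $C^*(\widetilde N)^\Gamma$ absorbs multiplication by the operator $(D+S\pm i)^{-1}$. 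But $C^*(\widetilde N)^\Gamma$ is an ideal in $D^*(\widetilde N)^\Gamma$, not in the algebra of all bounded $\Gamma$-equivariant operators: a product (arbitrary bounded operator)$\cdot$(Roe algebra element) is in general neither approximable by finite-propagation operators nor locally compact on the appropriate side. At that stage of the argument you know only that $(D+S\pm i)^{-1}$ is bounded; its membership in $D^*(\widetilde N)^\Gamma$ is not yet available — indeed it is (a weakening of) the very statement being proved, so the step is circular. The same objection applies to your earlier assertion that the bounded transform of $D\pm S$ differs from that of $D$ by an element of $D^*(\widetilde N)^\Gamma$ ``by a resolvent identity,'' and to the verification of condition (2) of Definition \ref{def analytically controlled map bounded}; note also that condition (2) is not needed for this lemma at all, since the conclusion only concerns the resolvents.

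The standard repair, which is essentially Higson–Roe's argument (the paper itself offers no proof, only the citation to \cite[Lemma 5.7]{HR051MappingsurgerytoanalysisIAnalyticsignatures}), avoids multiplying by unknown operators: factor
\[
D+S\pm i=\bigl(1+S(D\pm i)^{-1}\bigr)(D\pm i).
\]
Here $S(D\pm i)^{-1}\in C^*(\widetilde N)^\Gamma$ by the genuine ideal property ($S\in D^*(\widetilde N)^\Gamma$, $(D\pm i)^{-1}\in C^*(\widetilde N)^\Gamma$). Since $D\pm i$ maps $\mathrm{dom}(D)$ bijectively onto $H_{\widetilde N}$ and $D+S\pm i$ does as well, the bounded operator $1+S(D\pm i)^{-1}$ is a bijection of $H_{\widetilde N}$, hence invertible in $B(H_{\widetilde N})$; by spectral permanence for the unital $C^*$-subalgebra $C^*(\widetilde N)^{\Gamma,+}\subset B(H_{\widetilde N})$, its inverse lies in $C^*(\widetilde N)^{\Gamma,+}$. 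Then
\[
(D+S\pm i)^{-1}=(D\pm i)^{-1}\bigl(1+S(D\pm i)^{-1}\bigr)^{-1}\in C^*(\widetilde N)^\Gamma,
\]
because $C^*(\widetilde N)^\Gamma$ is an ideal in its unitization. With this substitution your outline becomes correct; alternatively, simply citing Higson–Roe, as the paper does, is legitimate.
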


By Lemma \ref{lemma odd functional cal for perturb}, one can see that for any  bounded function $g:(-\infty, \infty)\to \mathbb{R}$, all of the operators $g(D\pm S)$ and $g(D)$ belong to $D^*(\widetilde{N})^{\Gamma, +}$.

On the other hand, Higson and Roe also proved the following lemma (\cite[Lemma 5.8]{HR051MappingsurgerytoanalysisIAnalyticsignatures})

\begin{lemma}\label{lemma even functional cal for perturb}
For any bounded function 
\[g:(-\infty, \infty)\to \mathbb{R},\]
 we have $g(D\pm S)-g(D)$ belong to $C^*(\widetilde{N})^\Gamma$.
\end{lemma}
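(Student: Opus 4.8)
The plan is to deduce the statement from the single fact that the \emph{difference} of the resolvents of $D+S$ and $D$ already lies in $C^*(\widetilde{N})^\Gamma$, and then to enlarge progressively the class of functions $g$ for which the conclusion is known: first to $C_0(\mathbb{R})$, then to one fixed normalizing function, and finally to an arbitrary bounded continuous function by an elementary decomposition. It suffices to treat $D+S$; the case of $D-S$ follows by replacing $S$ with $-S$ and applying Lemma~\ref{lemma odd functional cal for perturb} to $D-S$.

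First I would record that the resolvents themselves lie in the Roe algebra. Analytic control of the complex gives $(D\pm i)^{-1}\in C^*(\widetilde{N})^\Gamma$, and Lemma~\ref{lemma odd functional cal for perturb} gives $(D+S\pm i)^{-1}\in C^*(\widetilde{N})^\Gamma$. Since $D$ and $D+S$ are self-adjoint, $z\mapsto (D-z)^{-1}$ and $z\mapsto (D+S-z)^{-1}$ are holomorphic $B(H_{\widetilde{N}})$-valued functions on $\mathbb{C}\setminus\mathbb{R}$; as $C^*(\widetilde{N})^\Gamma$ is a closed subspace into which both functions map at $z=\pm i$, analytic continuation on each of the two components of $\mathbb{C}\setminus\mathbb{R}$ puts $(D-z)^{-1}$ and $(D+S-z)^{-1}$ in $C^*(\widetilde{N})^\Gamma$ for every $z\notin\mathbb{R}$. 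The resolvent identity
\[
(D+S-z)^{-1}-(D-z)^{-1}=-(D+S-z)^{-1}\,S\,(D-z)^{-1},
\]
together with $S\in D^*(\widetilde{N})^\Gamma$ and the fact that $C^*(\widetilde{N})^\Gamma$ is an ideal in $D^*(\widetilde{N})^\Gamma$, then shows $(D+S-z)^{-1}-(D-z)^{-1}\in C^*(\widetilde{N})^\Gamma$, with norm $\le\|S\|\,|\mathrm{Im}\,z|^{-2}$. For $g\in C_0(\mathbb{R})$ this already finishes the job, since functional calculus in the $C^*$-algebra generated by the resolvents puts both $g(D)$ and $g(D+S)$ in $C^*(\widetilde{N})^\Gamma$.

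To reach functions that do not vanish at infinity it is enough to treat a single normalizing function, say $\psi(\lambda)=\lambda(1+\lambda^2)^{-1/2}$: an arbitrary bounded continuous $g$ with one-sided limits $c_\pm$ at $\pm\infty$ can be written $g=\tfrac{c_++c_-}{2}+\tfrac{c_+-c_-}{2}\,\psi+g_0$ with $g_0\in C_0(\mathbb{R})$, whence $g(D+S)-g(D)$ is a linear combination of $\psi(D+S)-\psi(D)$ and $g_0(D+S)-g_0(D)$. For $\psi$ I would begin from $(1+\lambda^2)^{-1/2}=\tfrac{2}{\pi}\int_0^\infty(1+\lambda^2+s^2)^{-1}\,ds$, split into partial fractions, and feed the resulting identity $\psi(\lambda)=\tfrac1\pi\int_0^\infty[(\lambda-i\mu_s)^{-1}+(\lambda+i\mu_s)^{-1}]\,ds$, with $\mu_s=\sqrt{1+s^2}$, into the spectral theorem for $D$ and $D+S$. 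The integrand being absolutely integrable uniformly in $\lambda$, Fubini applies at the level of matrix coefficients and yields
\[
\psi(D+S)-\psi(D)=\frac1\pi\int_0^\infty\Big[\big((D+S-i\mu_s)^{-1}-(D-i\mu_s)^{-1}\big)+\big((D+S+i\mu_s)^{-1}-(D+i\mu_s)^{-1}\big)\Big]\,ds .
\]
By the previous step the bracketed integrand is $C^*(\widetilde{N})^\Gamma$-valued with norm $\le 2\|S\|\,\mu_s^{-2}=2\|S\|(1+s^2)^{-1}$, which is integrable on $(0,\infty)$; hence the right-hand side is a norm-convergent Bochner integral valued in the closed subspace $C^*(\widetilde{N})^\Gamma$, so $\psi(D+S)-\psi(D)\in C^*(\widetilde{N})^\Gamma$, and the decomposition above completes the argument.

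The step I expect to be the main obstacle is this last one. The subtlety is that $\psi(D)$ and $\psi(D+S)$ are \emph{not} individually represented by norm-convergent resolvent integrals — $D$ may be unbounded with $\sigma(D)=\mathbb{R}$ — so one cannot simply subtract two such formulas; what rescues the argument is precisely the extra decay $\|(D+S-z)^{-1}-(D-z)^{-1}\|=O(|\mathrm{Im}\,z|^{-2})$ coming from the resolvent identity, which makes the integral for the \emph{difference} converge in operator norm. Consequently the spectral-theorem computation has to be carried out weakly and only afterwards recognized as a genuine Bochner integral. The remaining ingredients — the analytic-continuation argument, the $C_0$ functional calculus, and the elementary decomposition in the final step — are routine.
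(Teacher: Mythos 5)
The paper itself gives no proof of this lemma---it is quoted from Higson and Roe \cite[Lemma 5.8]{HR051MappingsurgerytoanalysisIAnalyticsignatures}---and your argument follows the same standard route as the original: place the resolvent difference $(D+S-z)^{-1}-(D-z)^{-1}=-(D+S-z)^{-1}S(D-z)^{-1}$ in $C^*(\widetilde{N})^\Gamma$ using Lemma \ref{lemma odd functional cal for perturb} and the ideal property, dispose of $C_0(\mathbb{R})$ by functional calculus, and handle one normalizing function through an integral formula in which only the \emph{difference} of resolvents, with its extra $|\mathrm{Im}\,z|^{-2}$ decay, is integrated. Your treatment of that last step (weak Fubini first, then recognition as a Bochner integral valued in the closed subspace $C^*(\widetilde{N})^\Gamma$) is sound, and you correctly identify it as the crux.

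Two points should be tightened. First, the phrase ``analytic continuation'' is not a valid justification as written: knowing membership in a closed subspace at the single points $z=\pm i$ gives nothing via the identity theorem, which needs a set with an accumulation point. What saves the step is that $C^*(\widetilde{N})^\Gamma$ is a closed \emph{subalgebra}: expand $(D-z)^{-1}=\sum_{n\ge 0}(z-z_0)^n(D-z_0)^{-n-1}$, a norm-convergent series of elements of the algebra for $|z-z_0|<|\mathrm{Im}\,z_0|$, and use connectedness of each half-plane; equivalently, note that $x\mapsto(x-z)^{-1}$ lies in $C_0(\mathbb{R})$, which is generated by $(x\mp i)^{-1}$, so the $C_0$-functional calculi of $D$ and $D+S$ already land in $C^*(\widetilde{N})^\Gamma$. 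Second, your proof covers bounded continuous $g$ with limits at $\pm\infty$, not literally ``any bounded function''; the literal statement is in any case too broad (for general bounded Borel functions it is not justified), and the class you treat is exactly what the paper uses: since $D\pm S$ are invertible with a uniform spectral gap, $P_+(D\pm S)=\phi(D\pm S)$ for a continuous $\phi$ with limits $0$ and $1$ at $\mp\infty$, so all applications such as $P_+(D+S)-P_+(D-S)\in C^*(\widetilde{N})^\Gamma$ follow from your version. With these adjustments the argument is correct.
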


Lemma \ref{lemma even functional cal for perturb} implies that
\[
P_+(D+S)-P_+(D-S)
\]
belongs to $C^*(\widetilde{N})^{\Gamma,+}$, where $P_+(D\pm S)$ are the positive projections of the invertible self-adjoint operators $D\pm S$ respectively. Since  $P_+(D\pm S)$ are projections in $D^*(\widetilde{N})^{\Gamma, +}$ whose difference belongs to $C^*(\widetilde{N})^{\Gamma,}$,  they give rise to a class in $K_0(C^*(\widetilde{N})^{\Gamma})$ represented by the following formal difference
\[
[P_+(D+S)]-[P_+(D-S)]\in K_0(C^*(\widetilde{N})^{\Gamma}).
\]

\begin{definition}
\begin{enumerate}

\item Let $(H_{\widetilde{N}}, d, S)$ be an odd-dimensional $\Gamma$-equi-\\variant analytically controlled Hilbert Poincar\'e complex over $\widetilde{N}$. Its signature is the class in $K_1( C^*(\widetilde{N})^\Gamma )$ of the invertible operator
\[
(D+S)(D-S)^{-1} : \oplus_k H_{2k, \widetilde{N}} \to  \oplus_k H_{2k, \widetilde{N}}.
\]

\item Let $(H_{\widetilde{N}}, d, S)$ be an even-dimensional $\Gamma$-equivariant analytically controlled Hilbert Poincar\'e complex over $\widetilde{N}$. Its signature is the class in $K_0( C^*(\widetilde{N})^\Gamma )$ determined by the following formal difference
\[
[P_+(D+S)]-[P_+(D-S)].
\]

\end{enumerate}
\end{definition}

In the following, we consider only even-dimensional complexes.

\subsection{Homotopy invariance of the Signature class}\label{subsec homo inva of the signature class}

In this subsection, we shall briefly recall Higson and Roe's proof of the invariance of the signature class of the $\Gamma$-equivariant analytically controlled Hilbert Poincar\'e complex over $\widetilde{N}$ under the $\Gamma$-equivariant analytically controlled chain homotopy equivalence (\cite{HR051MappingsurgerytoanalysisIAnalyticsignatures}).
This proof is important to the construction of the higher $\rho$ invariant associated to a homotopy equivalence.

Let
\[
A: (H_{\widetilde{N}}, d, S)\to (H'_{\widetilde{N}}, d', S')
\]
 be a homotopy equivlence between two $\Gamma$-equivariant analytically controlled Hilbert Poincar\'e complexes over $\widetilde{N}$. We set
\[\overline{d} = \left(
\begin{array}{cc}
d & 0 \\
0   & d'
\end{array}
\right)\ \text{and}\ \overline{D}= \overline{d}+ \overline{d}^* .\]
Let $\overline{S}$ be
\[
 \left(
\begin{array}{cc}
S & 0 \\
0 & -S'
\end{array}
\right).
\]
Consider the $\Gamma$-equivariant analytically controlled Hilbert Poincar\'e complex over $\widetilde{N}$
\[
(H_{\widetilde{N}} \oplus H'_{\widetilde{N}},
\overline{d} ,\overline{S} ).
\]
Then its signature class in $K_0(C^*(\widetilde{N})^\Gamma)$ is determined by the following formal difference
\[
[P_+(\overline{D} + \overline{S})]- [P_+(\overline{D}-\overline{S})].
\]
By definition,
\[
\left(
\begin{array}{cc}
 & A^*\\
1  &
\end{array}
\right)
\left(
\begin{array}{cc}
-S' & 0\\
0  & S'
\end{array}
\right)
\left(
\begin{array}{cc}
 & 1\\
A &
\end{array}
\right)=\begin{pmatrix}
 A^* S' A  &  \\
 & -S'
\end{pmatrix}
\]
induces the same homology isomorphism with
\[
\left(
\begin{array}{cc}
S & 0\\
0  & -S'
\end{array}
\right).
\]
Thus for any $t\in [0,1]$,
\[
(H_{\widetilde{N}} \oplus H'_{\widetilde{N}},
\overline{d} ,  \left(
\begin{array}{cc}
(1-t)S + t A^* S' A & 0\\
0 &  -S'
\end{array}
\right)),
\]
are all $\Gamma$-equivariant analytically controlled Hilbert Poincar\'e complex over $\widetilde{N}$. Direct computation  shows that for any $t\in [0,1]$,
\[
( H_{\widetilde{N}} \oplus H'_{\widetilde{N}},
\overline{d},
\left(
\begin{array}{cc}
\cos (\frac{\pi}{2}t) A^* S' A & \sin (\frac{\pi}{2}t) A^* S'  \\
\sin (\frac{\pi}{2}t)   S' A   & - \cos (\frac{\pi}{2}t) S'
\end{array}
\right)),
\]
and
\[
( H_{\widetilde{N}} \oplus H'_{\widetilde{N}},
\overline{d},
 \left(
\begin{array}{cc}
	0 &  e^{i\pi t}S' A\\
	e^{-i\pi t}A^*S'   & 0
\end{array}
\right) )
\]
are all $\Gamma$-equivariant analytically controlled Hilbert Poincar\'e complex over $\widetilde{N}$.

Set
\[
P[A](t)=P_+\left(	\overline{D}+
\begin{pmatrix}
(1-t)S + t A^* S' A & 0\\
0 &  -S'
\end{pmatrix}
\right)
\]
when $t\in [0,1]$,
\[P[A](t)=P_+\left(	\overline{D}+   \begin{pmatrix}
\cos (\frac{\pi}{2}(t-1)) A^* S' A & \sin (\frac{\pi}{2}(t-1)) A^* S'  \\
\sin (\frac{\pi}{2}(t-1))   S' A   & - \cos (\frac{\pi}{2}(t-1)) S'
\end{pmatrix}\right)\]
when $t\in [1,2]$ and
\[
P[A](t)=P_+\left(	\overline{D}+
\begin{pmatrix}
	0 & S' A\\
	A^*S'   & 0
\end{pmatrix}
\right)\]  	
when $t\in [2,3]$. Then the path $P[A]: [0,3]\to D^*(\widetilde{N})^\Gamma$ is a continuous path of projections in $D^*(\widetilde{N})^\Gamma$.

Similarly, set
\[Q[A](t)=P_+\left(	\overline{D}- \begin{pmatrix}
(1-t)S + t A^* S' A & 0\\
0 &  -S'
\end{pmatrix}\right)\]
when $t\in [0,1]$,
\[ Q[A](t)=P_+(	\overline{D}- \left(
\begin{pmatrix}
\cos (\frac{\pi}{2}(t-1)) A^* S' A & \sin (\frac{\pi}{2}(t-1)) A^* S'  \\
\sin (\frac{\pi}{2}(t-1))   S' A   & - \cos (\frac{\pi}{2}(t-1)) S'
\end{pmatrix}\right)
\]
when $t\in [1,2]$
and
\[Q[A](t)=P_+\left(	\overline{D}-
\begin{pmatrix}
	0 &  e^{i\pi (t-2)}S' A\\
	e^{-i\pi (t-2)}A^*S'   & 0
\end{pmatrix}
\right)\]
when $t\in [2,3]$.  Then the path 
\[Q[A]: [0,3]\to D^*(\widetilde{N})^\Gamma\]
 is a continuous path of projections in $D^*(\widetilde{N})^\Gamma$.

By Lemma \ref{lemma odd functional cal for perturb} and Lemma \ref{lemma even functional cal for perturb}, one can see that
\[
P[A](t)-Q[A](t)\in C^*(\widetilde{N})^\Gamma, t\in [0,3],
\]
and that
\[
P[A](3)=Q[A](3).
\]
Hence the signature class of the  complex
\[
(H_{\widetilde{N}} \oplus H'_{\widetilde{N}},
\overline{d} ,\overline{S} ).
\]
is trivial, i.e. the signature class of the complex
\[
(H_{\widetilde{N}}, d, S)
\]
equals the one of the complex
\[
(H'_{\widetilde{N}}, d', S').
\]

\section{$K$-homology class of the signature operator}\label{sec K homology}
In this section, we introduce two   $\Gamma$-equivariant analytically controlled Hilbert Poincar\'e complexes over $\widetilde{N}$. We show that the $K$-homology class associated to  each of two complexes coincides with the $K$-homology class of the signature operator on $N$. 

\subsection{$L^2$ simplicial cochain complex}\label{subsec L2 simplicial and bounded complex}

In this subsection, we recall the definition of a bounded Hilbert Poincar\'e complex, and recall the definition of  the $K$-homology class of the signature operator on a manifold associated to this complex.

For an $n=2k$ dimensional manifold $N$ with fundamental group $\Gamma$, equip $\widetilde{N}$ with a $\Gamma$-invariant triangulation $\text{Tri}(\widetilde{N} )$.  The  $L^2$-completion of the  simplicial cochain complex $(E^*(\widetilde{N}) , b_{\widetilde N})$ given by the triangulation then induces a bounded $\Gamma$-equivariant analytically controlled Hilbert complex over $\widetilde{N}$,
\[(L^2(E^*(\widetilde{N})),
b_{\widetilde N}).
\]
The Poincar\'e duality map $T_{\widetilde N}$ of $( L^2(E^*(\widetilde{N})), b_{\widetilde N})$ is given by taking the cap product with the fundamental class $[\widetilde N]$. Define operators $S_{\widetilde N}$ by
\[
S_{\widetilde N} (v) = i^{p(p-1)+ \frac{n}{2}} T_{\widetilde N}(v),\ \forall v\in L^2(E^p(\widetilde{N})).
\]
Then $S_{\widetilde N}$ satisfies
\[
 S_{\widetilde N}^*=S_{\widetilde N} , S_{\widetilde N} b_{\widetilde N}= - b_{\widetilde N}^* S_{\widetilde N}, S^2_{\widetilde N}=1.
\]
By definition,
\[(L^2(E^*(\widetilde{N})),
b_{\widetilde N}, S_{\widetilde N})\]
is an analytically controlled Hilbert Poincar\'e complex. Recall that its signature class is defined as
\[
[P_+(B_{\widetilde N}+S_{\widetilde N})]-[P_+(B_{\widetilde N}-S_{\widetilde N})]\in K_0(C^*(\widetilde N)^\Gamma),
\]
where $B_{\widetilde N}$ is $b_{\widetilde N}+b^*_{\widetilde N}.$
We will denote the signature class of the complex
\[(L^2(E^*(\widetilde{N})),
b_{\widetilde N}, S_{\widetilde N})\]
by
\[
\text{Signature}_{PL}(\widetilde N).
\]

In the following, we briefly recall a  procedure for defining the $K$-homology class of the signature operator in $K_0(C^*_L(\widetilde{N})^\Gamma)$ by the simplicial cochain complex. This procedure was first given in  \cite[Appendix B]{WXY18Additivityofhigherrhoinvariantsandnonrigidityoftopologicalmanifolds} and \cite{HX18HighersignaturesofWittspaces}.

Consider the $\Gamma$-invariant triangulation $\text{Tri}(\widetilde{N} )$.  There exits a refinement procedure for the triangulation $\text{Tri}(\widetilde{N} )$ so that all of the successive refinements have bounded geometry uniformly,  cf. \cite[Section 4.2]{WXY18Additivityofhigherrhoinvariantsandnonrigidityoftopologicalmanifolds} and \cite{HX18HighersignaturesofWittspaces}. More precisely, this procedure generates a series of $\Gamma$-equivariant  subdivisions
 \[
\text{Sub}^0(\widetilde{N} )=\text{Tri}(\widetilde{N} ), \ \text{Sub}^n(\widetilde{N} ): =\text{Sub}( \text{Sub}^{n-1}(\widetilde{N} ) ), n\in \mathbb{N}_+,
 \]
for which there is a positive integer $k$, such that each of the vertices in $\text{Sub}^n(\widetilde{N} )$ lies in at most $k$ different simplices for all $n$.

Recall that every locally finite simplicial complex carries a natural metric, whose restriction to each $n$-simplex is the Riemannian metric obtained by identifying this $n$-simplex with the standard one in the Euclidean space $\mathbb{R}^n$. Such a metric is called the simplicial metric. Let $\widetilde{N}_n, n \in \mathbb{N}$ be the manifold $\widetilde{N}$  equipped with the $\Gamma$-invavariant  triangulation $\text{Sub}^n(\widetilde{N})$ and the corresponding simplicial metric. Set
\[
b_{\widetilde N, \mathbb{N}}= \oplus_n b_{\widetilde {N}_n}, S_{\widetilde N, \mathbb{N}} = \oplus_n S_{\widetilde{N}_n}.
\]
The signature class
\[
[P_+(B_{\widetilde N, \mathbb{N}}+S_{\widetilde N, \mathbb{N}})]-[ P_+(B_{\widetilde N, \mathbb{N}}-S_{\widetilde N, \mathbb{N}})]
\]
of the complex
\[
(L^2(E^*(\sqcup_n \widetilde{N}_n)),  b_{\widetilde N, \mathbb{N}},S_{\widetilde N, \mathbb{N}})
\]
gives an element in $K_0(C^*( \sqcup_n \widetilde{N}_n )^\Gamma)$. Due to the construction in Subsection \ref{subsec homo inva of the signature class}, the controlled homotopy equivalence
\[
A_n : (L^2(E^*( \widetilde{N}_n)),  b_{\widetilde{N}_n},S_{\widetilde{N}_n})\to ( L^2(E^*(\widetilde{N}_{n+1})),  b_{\widetilde{N}_{n+1}},S_{\widetilde{N}_{n+1}})
\]
produces a path of  differences 
\[P_{\widetilde N, n}[A_n](t)-Q_{\widetilde N,n}[A_n](t), t\in [0,1]\]
connecting
\[
P_+(B_{\widetilde{N}_n}+S_{\widetilde{N}_n})- P_+(B_{\widetilde{N}_n}-S_{\widetilde{N}_n})\]
and
 \[ P_+(B_{\widetilde{N}_{n+1}}+S_{\widetilde{N}_{n+1}})- P_+(B_{\widetilde{N}_{n+1}}-S_{\widetilde{N}_{n+1}}).
\]
For $t\in [0, \infty), $ define $F_{PL,\widetilde N}(t)$ as
\[
 \left\{\begin{array}{cc}
P_+(B_{\widetilde{N}_t}+S_{\widetilde{N}_t})- P_+(B_{\widetilde{N}_t}-S_{\widetilde{N}_t}) & t\in \mathbb{N},\\
P_{\widetilde N, n}[A_n](t-n)-Q_{\widetilde N,n}[A_n](t-n) & n\leq t\leq n+1, n\in \mathbb{N}.
\end{array}
\right.
\]
Scale the simplicial metric of $\widetilde{N}_n$ back to the original metric of $\widetilde{N}$, then $F_{PL,\widetilde N}$ represents a class in $K_0(C^*_L( \widetilde{N})^\Gamma)$. In fact, by construction, one can see that
\[
\lim_{t\to \infty} \text{propagation}\ F_{PL,\widetilde N}(t)=0,
\]
and
\[
[F_{PL, \widetilde N}(0)]= \text{Signature}_{PL}(\widetilde N).
\]
In the following, we refer to  $[F_{PL, \widetilde N}]$ as the $K$-homology class of the signature operator associated to the complex
\[(L^2(E^*(\widetilde{N})),
b_{\widetilde N}, S_{\widetilde N}).\]

\subsection{$L^2$ deRham complex}\label{subsec L 2 form and unbounded com}

In this subsection, we recall the definition of an unbounded Hilbert Poincar\'e complex, and give the definition of  the $K$-homology class of the signature operator on a manifold associated to this complex.

Recall that $N$ is an $n=2k$ dimensional compact manifold.
Let $L^2(\wedge^p (\widetilde{N}))$ be the Hilbert space of $L^2$ $p-$forms. We further set $d_{\widetilde N}$ to be the $\Gamma$-invariant differentials and $\tau_{\widetilde N}$ to be the chiral  dualities, i.e
\[
\tau_{\widetilde N}(f)=i^{p(p-1)+ \frac{n}{2}} \star f,\ \forall f\in L^2(\wedge^p (\widetilde{N})),
\]
where $\star$ is the Hodge $\star$ operator.
 Then we have an unbounded analytically controlled Hilbert Poincar\'e complex:
\[
(L^2(\wedge^* (\widetilde{N})), d_{\widetilde N}, \tau_{\widetilde N}).
\]
The signature class of this complex is then determined by the formal difference
\[
\text{Signature}_{C^\infty}(\widetilde N):=[P_+(D_{\widetilde N} + \tau_{\widetilde N})]- [P_+(D_{\widetilde N}-\tau_{\widetilde N})].
\]

In \cite[Theorem 5.12]{HR052MappingsurgerytoanalysisIIGeometricsignatures}, it was proved that for compact manifold $N$, the usual  homotopy equivalence
\[H: (L^2(\wedge^* (\widetilde{N})), d_{\widetilde N})  \to  (L^2(E^*(\widetilde{N})), b_{\widetilde N}) \]
between the deRham complex and the simplicial cochain complex is an analytically controlled homotopy equivalence of analytically controlled Hilbert Poincar\'e complexes:
\[H: (L^2(\wedge^* (\widetilde{N})), d_{\widetilde N}, \tau_{\widetilde N})  \to  (L^2(E^*(\widetilde{N})), b_{\widetilde N}, S_{\widetilde N}). \]
Thus we have
\[\text{Signature}_{C^\infty}(\widetilde N)=\text{Signature}_{PL}(\widetilde N).\]

We now construct a representative, $F_{C^\infty,\widetilde N}$, of the $K$-homology class of the signature operator associated to the complex
\[
(L^2(\wedge^* (\widetilde{N})), d_{\widetilde N}, \tau_{\widetilde N}).
\]

For any $t\in [0,1]$, we consider $\sqcup_{n\in \mathbb{N}} \widetilde{N}_{n+t}$, where $\widetilde{N}_{n+t}$ is the manifold $\widetilde{N}$ equipped  with the metric $(n+1+t)$ times of the original one. Consider
\[
d_{\widetilde{N}, \mathbb{N} + t}=
	d_{\sqcup_{n\in \mathbb{N}} \widetilde{N}_{n+t}},
\tau_{\widetilde{N}, \mathbb{N} + t}=
	\tau_{\sqcup_{n\in \mathbb{N}} \widetilde{N}_{n+t}}.
\]
Then the signature class of
\[(L^2(\wedge^* (\sqcup_{n\in \mathbb{N}} \widetilde{N}_{n+t} ) ) , d_{\widetilde{N}, \mathbb{N} + t}, \tau_{\widetilde{N}, \mathbb{N} + t})\]
in $K_0(C^*(\sqcup_{n\in \mathbb{N} } \widetilde{N}_{n+t})^\Gamma)$ then equals the formal difference
\[
[P_+( D_{\widetilde{N}, \mathbb{N} + t}+\tau_{\widetilde{N}, \mathbb{N} + t})]-[P_+( D_{\widetilde{N}, \mathbb{N} + t}-\tau_{\widetilde{N}, \mathbb{N} + t})].
\]

Scale $\widetilde{N}_{n+t}$ back to $\widetilde{N}$ and range over all of $t$ in $[0,1]$, the path of differences of projections
\[
n+t\mapsto P_+( D_{\widetilde{N}, n + t}+\tau_{\widetilde{N}, n+ t})-P_+( D_{\widetilde{N}, n+ t}-\tau_{\widetilde{N}, n + t})
\]
becomes an element in $C^*_L(\widetilde{N})^\Gamma$. We denote this element as $F_{C^\infty,\widetilde{N}},$ and call the class of the corresponding formal difference $[F_{C^\infty,\widetilde{N}}]\in K_0(C^*_L(\widetilde{N})^\Gamma)$ as the K-homology class of the signature operator  associated to the complex
\[(L^2(\wedge^* (\widetilde{N})), d_{\widetilde N}, \tau_{\widetilde N}).\]
By construction, one can see that
\[
[F_{C^\infty,\widetilde N}(0)]=\text{Signature}_{C^\infty}(\widetilde N).
\]

There is an equivalent way to define $F_{C^\infty,\widetilde N}$, which is similar to the definition of $F_{PL,\widetilde N}$. In fact, the identity map
 \begin{eqnarray*}
 I_n : \widetilde{N}_n&\to& \widetilde{N}_{n+1}\\
  x&\mapsto &x
 \end{eqnarray*}
 is a controlled homotopy equivalence for any $n\in \mathbb{N}$.  Set
\[
d_{\widetilde{N}, \mathbb{N}}= \oplus_n d_{\widetilde{N}_n}, \tau_{\widetilde{N}, \mathbb{N}} = \oplus_n \tau_{\widetilde{N}_n}.
\]
The signature class
\[
[P_+(D_{\widetilde{N}, \mathbb{N}}+\tau_{\widetilde{N}, \mathbb{N}})]- [P_+(D_{\widetilde{N}, \mathbb{N}}-\tau_{\widetilde{N}, \mathbb{N}})]
\]
of
\[
(L^2(\wedge^*(\sqcup_n \widetilde{N}_n)),  d_{\widetilde{N}, \mathbb{N}},\tau_{\widetilde{N}, \mathbb{N}})
\]
gives an element in $K_0(C^*( \sqcup_n \widetilde{N}_n )^\Gamma)$. Then by the construction in Subsection \ref{subsec homo inva of the signature class}, the controlled homotopy equivalence
\[
I_n: (L^2(\wedge^* ( \widetilde{N}_n)),  d_{\widetilde{N}_n},\tau_{\widetilde{N}_n})\to ( L^2(\wedge^* ( \widetilde{N}_{n+1})),  d_{\widetilde{N}_{n+1}},\tau_{\widetilde{N}_{n+1}})
\]
produces a path of difference of projections
\[t\mapsto P_{\widetilde{N}, n}[I_n](t)-Q_{\widetilde{N}, n}[I_n](t), t\in [0,1]\]
 connecting
\[
P_+(D_{\widetilde{N}_n}+\tau_{\widetilde{N}_n})- P_+(D_{\widetilde{N}_n}-\tau_{\widetilde{N}_n})\]
and
 \[
 P_+(D_{\widetilde{N}_{n+1}}+\tau_{\widetilde{N}_{n+1}})- P_+(D_{\widetilde{N}_{n+1}}-\tau_{\widetilde{N}_{n+1}}).
\]
For $t\in [0, \infty)$, set $F_{C^\infty,\widetilde{N}}'(t)$
as
\[P_+(D_{\widetilde{N}_t}+\tau_{\widetilde{N}_t})- P_+(D_{\widetilde{N}_t}-\tau_{\widetilde{N}_t})\]
when $t\in \mathbb{N}_+,$ and as
\[P_{\widetilde{N}, n}[I_n](t-n)-Q_{\widetilde{N}, n}[I_n](t-n)\]
when $ n\leq t\leq n+1, n\in \mathbb{N}.$

Scale the metric of $\widetilde{N}_n$ back to the original metric of $\widetilde{N}$, $F_{C^\infty,\widetilde{N}}'$ becomes an element in $C^*_L( \widetilde{N} )^\Gamma$. Apply the argument in Subsection \ref{subsec homo inva of the signature class} to any $t\in [0,+\infty), $ it is not difficult to see that
$[F_{C^\infty,\widetilde{N}}']=[F_{C^\infty,\widetilde{N}}]\in K_0(C^*_L( \widetilde{N} )^\Gamma).$

The following lemma follows from constructions of $F_{C^\infty, \widetilde{N}}$ and $F_{PL, \widetilde{N}}$.
\begin{lemma}\label{lemma equiv of F PL and smooth}
We have $[F_{C^\infty, \widetilde{N}}]= [F_{PL, \widetilde{N}}]\in  K_0(C^*_L( \widetilde{N} )^\Gamma).$
\end{lemma}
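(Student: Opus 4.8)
The plan is to compare the two localization paths $F_{C^\infty,\widetilde N}$ and $F_{PL,\widetilde N}$ directly, using the fact that both are assembled out of the same building blocks from Subsection \ref{subsec homo inva of the signature class}, but glued along different families of controlled homotopy equivalences. The key observation is that we are really free to choose \emph{which} controlled homotopy equivalences we use to connect consecutive signature projections, because any two such choices produce paths in $C^*_L(\widetilde N)^\Gamma$ that differ only by a continuous path of differences lying in $C^*(\widetilde N)^\Gamma$ with propagation tending to zero, hence represent the same class in $K_0(C^*_L(\widetilde N)^\Gamma)$. So the first step is to set up a single localization path that interpolates between both descriptions.

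First I would invoke Theorem 5.12 of \cite{HR052MappingsurgerytoanalysisIIGeometricsignatures}, quoted in Subsection \ref{subsec L 2 form and unbounded com}, which gives for each $n$ an analytically controlled chain homotopy equivalence of Hilbert Poincar\'e complexes
\[
H_n: (L^2(\wedge^*(\widetilde N_n)), d_{\widetilde N_n}, \tau_{\widetilde N_n}) \to (L^2(E^*(\widetilde N_n)), b_{\widetilde N_n}, S_{\widetilde N_n}),
\]
where $\widetilde N_n$ carries the $n$-th subdivision $\text{Sub}^n(\widetilde N)$ and its simplicial metric. By the homotopy invariance argument of Subsection \ref{subsec homo inva of the signature class}, each $H_n$ produces a path of differences of projections connecting
\[
P_+(D_{\widetilde N_n}+\tau_{\widetilde N_n})-P_+(D_{\widetilde N_n}-\tau_{\widetilde N_n})
\]
to
\[
P_+(B_{\widetilde N_n}+S_{\widetilde N_n})-P_+(B_{\widetilde N_n}-S_{\widetilde N_n}),
\]
with all intermediate terms lying in $D^*$ and the difference in $C^*$. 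The second step is to assemble these into a path over $[0,\infty)$: use the $H_n$'s on half-integer intervals to pass from the deRham side to the PL side at parameter $n$, and use the $I_n$'s (resp. $A_n$'s) on the other half-intervals to move from $\widetilde N_n$ to $\widetilde N_{n+1}$ on the deRham (resp. PL) side. Scaling the metrics back to the original metric of $\widetilde N$, one checks as in the two constructions above that the propagation of all terms tends to $0$ as $t\to\infty$ — this uses the uniform bounded geometry of the subdivisions, cf. \cite[Section 4.2]{WXY18Additivityofhigherrhoinvariantsandnonrigidityoftopologicalmanifolds}, which ensures the $H_n$ have uniformly bounded propagation after rescaling.

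The third step is the homotopy-of-paths argument: the path built by inserting the $H_n$ crossings gives, at $t=0$, the class $\text{Signature}_{C^\infty}(\widetilde N)$, and the two ways of continuing it (via the $I_n$'s staying on the deRham side, versus crossing over immediately via $H_0$ and then running the $A_n$'s on the PL side) are connected by a continuous homotopy of localization paths, since at each stage we are choosing between composites of controlled homotopy equivalences $H_{n+1}\circ I_n$ and $A_n\circ H_n$ that induce the same map on homology up to controlled chain homotopy, so Subsection \ref{subsec homo inva of the signature class} applies uniformly in the homotopy parameter. Concatenating, $[F_{C^\infty,\widetilde N}]$ and $[F_{PL,\widetilde N}]$ are joined by a path in the relevant space of localized differences of projections, hence are equal in $K_0(C^*_L(\widetilde N)^\Gamma)$.

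The main obstacle I expect is bookkeeping the propagation estimates uniformly: one must verify that after scaling $\widetilde N_n$ back to $\widetilde N$, the simplicial metric on $\text{Sub}^n(\widetilde N)$ and the rescaled Riemannian metric stay uniformly comparable, and that the propagation of $H_n$, $A_n$, $I_n$ and of all the interpolating projections $P[\cdot](t)$, $Q[\cdot](t)$ from Subsection \ref{subsec homo inva of the signature class} shrinks to zero at a uniform rate; this is where the uniform bounded geometry of the subdivisions is essential and where the argument must be carried out with some care, even though each individual ingredient is already contained in the cited constructions. Everything else is a formal concatenation and homotopy of paths of projections, using only Lemma \ref{lemma odd functional cal for perturb} and Lemma \ref{lemma even functional cal for perturb} to control the algebra membership.
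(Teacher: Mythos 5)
Your proposal follows essentially the same route as the paper: it uses the Higson--Roe controlled homotopy equivalences $H_n$ between the de Rham and simplicial complexes at each subdivision level, the deformation machinery of Subsection \ref{subsec homo inva of the signature class} applied uniformly in a second parameter, and the uniform bounded geometry of the subdivisions for the propagation control, which is exactly how the paper connects $F_{PL,\widetilde N}$ to $F'_{C^\infty,\widetilde N}$ (the paper organizes the two-parameter family by first deforming the dualities via $\sqcup_n H_n$ and then applying $A_n\oplus I_n$ at each fixed deformation parameter). One caveat: your opening assertion that any two choices of connecting controlled homotopy equivalences automatically yield the same class in $K_0(C^*_L(\widetilde N)^\Gamma)$ is stronger than what you justify; what actually carries the argument is your third step, the controlled compatibility of $H_{n+1}\circ I_n$ with $A_n\circ H_n$, which plays the same role as the paper's (implicit) claim that $A_n\oplus I_n$ is compatible with the $H_n$-deformed dualities.
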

\begin{proof}
 Let $\sqcup_n H_n$ be the homotopy equivalence of controlled complexes
\begin{eqnarray*}
\sqcup_n H_n&:&(L^2(\wedge^* (\sqcup_{n\in \mathbb{N}} \widetilde{N}_{n} ) )  , d_{\widetilde{N}, \mathbb{N} }, \tau_{\widetilde{N},  \mathbb{N}})\\
&&\to (L^2(\sqcup_{n\in \mathbb{N}}  E^*(\widetilde{N}_n)), b_{\widetilde{N}, \mathbb{N}},  S_{\widetilde{N}, \mathbb{N}}) .
\end{eqnarray*}
 Thus we can connect
\[P_+(B_{\widetilde N, \mathbb{N}}+S_{\widetilde N, \mathbb{N}})-P_+(B_{\widetilde N, \mathbb{N}}-S_{\widetilde N, \mathbb{N}})\]
to
\[
P_+( D_{\widetilde N, \mathbb{N}}+\tau_{\widetilde N, \mathbb{N}})-P_+( D_{\widetilde N, \mathbb{N}}-\tau_{\widetilde N, \mathbb{N}})
\]
by a path of  differences of projections
\[
P_{\widetilde N, \mathbb{N} }[\sqcup_n H_n](s)- Q_{ \widetilde N,\mathbb{N} }[\sqcup_n H_n](s), s\in [0,1],
\]
 which determines $K_0(C^*(\sqcup_n \widetilde{N}_n )^\Gamma)$ class for any $s$.
Then the homotopy equivalence $A_n\oplus I_n $ from
\[
(E^*(\widetilde{N}_n), b_{\widetilde N_n},  S_{\widetilde N_n}) \oplus (L^2(\wedge^* ( \widetilde{N}_n ) )  , d_{\widetilde N_n}, \tau_{\widetilde N_n}) \]
to
\[(E^*(\widetilde{N}_{n+1}), b_{\widetilde N_{n+1}},  S_{\widetilde N_{n+1}}) \oplus (L^2(\wedge^* ( \widetilde{N}_{n+1} ) )  , d_{\widetilde N_{n+1} }, \tau_{\widetilde N_{n+1}})
\]
produces pathes connecting the element $P_{\widetilde N, n }[\sqcup_n H_n](s)- Q_{ \widetilde N,n }[\sqcup_n H_n](s)$ to the element $ P_{\widetilde N, n+1 }[\sqcup_n H_n](s)- Q_{\widetilde N, n+1 }[\sqcup_n H_n](s)$, $\forall s\in [0,1].$
Scale $\widetilde N_n$ back to $\widetilde N$, one can obtain a path of differences of projections in $C^*_L(\widetilde N)^\Gamma$ that connects $F_{PL,\widetilde N}$ to $F'_{C^\infty,\widetilde N}$, which finishes the proof.
\end{proof}

\subsection{Index theoretic approach}\label{sub index theo app of k homo}

In this subsection, we recall the index theoretic approach to the definition of the $K$-homology class of the signature operator on a manifold.

On an even dimensional manifold $N$ with fundamental group $\Gamma$, let $D_{\widetilde N}$ be the signature operator on $\widetilde{N}$, and  $\tau_{\widetilde N}$ be the  chiral duality operator.  Recall that $\tau_{\widetilde N}$ is a symmetry, that is $\tau_{\widetilde N}^*=\tau_{\widetilde N}, \ \tau_{\widetilde N}^2=1$.

A normalizing function is a continuous odd function  $\phi:\mathbb{R} \to [-1,1]$ such that $\lim\limits_{x\to +\infty}\phi(x)=1$. Set $g(x)= \frac{x}{\sqrt{1+x^2}}, \ h(x)= \sqrt{1-g^2(x)}$.

Define operators $G_{\widetilde{N}}(t),t\in [0,+\infty)$ as
\[
	g(\frac{D_{\widetilde{N}}}{t+1} ) - \tau_{\widetilde{N}} h(\frac{D_{\widetilde{N}}}{t+1}).
\]
Since $g$ can be approximated by normalizing functions $\{\phi_n\}$ whose Fourier transform have compact support, and that the propagation of
\[
\phi(\frac{D_{\widetilde{N}}}{t+1})=\int_{-\infty}^{\infty} \hat{\phi}(s)e^{2\pi i s \frac{D_{\widetilde{N}}}{t+1}}d s
\]
tends to $0$ as $t$ tends to $\infty,$ the path $t\mapsto G_{\widetilde{N}}(t), t\in [0,+\infty),$ defines a symmetry in $D^*_L(\widetilde{N})^{\Gamma}$.
By definition, we have
\[t\mapsto  G_{\widetilde{N}}(t)\tau_{\widetilde{N}}+\tau_{\widetilde{N}} G_{\widetilde{N}}(t) \in C^*_L(\widetilde{N})^\Gamma.\]

 Define
 \[
 F_{\widetilde{N}}:=P_+(-G_{\widetilde{N}}\tau_{\widetilde{N}} G_{\widetilde{N}})-P_+(\tau_{\widetilde{N}})\in C^*_L(\widetilde{N})^{\Gamma}.
 \]
 In fact, $F_{\widetilde{N}}(t)$ is defined by applying Roe's index formula in \cite{R93CoarsecohomologyandindextheoryoncompleteRiemannianmanifolds} to $g(\frac{D_{\widetilde{N}}}{t+1} )$.

Then the following formal difference
\[
[F_{\widetilde{N}}]:=[P_+(-G_{\widetilde{N}}\tau_{\widetilde{N}} G_{\widetilde{N}} ) ]-[P_+(\tau_{\widetilde{N}})]
\]
forms an element in $K_0(C_L^*(\widetilde{N})^G).$  We call this element the $K$-homology class of the signature operator $D_{\widetilde{N}},$ or simply the  $K$-homology class of the signature operator on $N.$

In the following, we shall give a different interpretation of $F_{\widetilde{N}}.$ For any $t\in [0,+\infty),$
\[(L^2(\wedge^* (\widetilde{N})), \frac{d_{\widetilde N}}{t+1}, \tau_{\widetilde N})\]
is an analytically controlled Hilbert Poincar\'e complex, thus
\[
\frac{D_{\widetilde{N}}}{t+1} \pm \tau_{\widetilde N}
\]
are invertible.

\begin{lemma}\label{lemma equivalent def of F op}
The path
\[
F_{\widetilde{N}}':=t\mapsto P_+(\frac{D_{\widetilde{N}}}{t+1}+\tau_{\widetilde N})-P_+(\frac{D_{\widetilde{N}}}{t+1}-\tau_{\widetilde N})
\]
belongs to $C^*_L(\widetilde{N})^\Gamma$, thus the following formal difference
\[
[F_{\widetilde{N}}']= [t\mapsto P_+(\frac{D_{\widetilde{N}}}{t+1}+\tau_{\widetilde N})]-[t\mapsto P_+(\frac{D_{\widetilde{N}}}{t+1}-\tau_{\widetilde N})]
\]
lies in $K_0(C^*_L(\widetilde{N})^\Gamma)$. We have
\[
[F_{\widetilde{N}}']=[F_{\widetilde{N}}]\in K_0(C^*_L(\widetilde{N})^\Gamma).
\]
\end{lemma}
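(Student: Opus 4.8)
The plan is to show that both $F_{\widetilde N}$ and $F_{\widetilde N}'$ arise, fiberwise in $t$, as the two standard descriptions of the index (difference of positive projections) attached to the invertible operator $\frac{D_{\widetilde N}}{t+1} + \tau_{\widetilde N}$ — or rather to a homotopic family of symmetries — and that these two descriptions agree in $K_0(C^*(\widetilde N)^\Gamma)$ uniformly and continuously in $t$. First I would record the algebraic identity relating the two pictures: if $D$ and $\tau$ are as above with $\tau^2 = 1$, $\tau^* = \tau$, and $D\tau + \tau D$ controlled, then $G := g(\tfrac{D}{t+1}) - \tau h(\tfrac{D}{t+1})$ is a self-adjoint involution modulo $C^*(\widetilde N)^\Gamma$ (indeed $G^2 = g^2 + h^2 = 1$ on the nose since $g$ and $h$ commute and $\tau$ anticommutes with $g(\tfrac{D}{t+1})$ up to $C^*$), and $-G\tau G$ is again a symmetry. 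So $P_+(-G\tau G) - P_+(\tau)$ is exactly the relative index of the pair of symmetries $(\tau, -G\tau G)$; this is $F_{\widetilde N}(t)$ by construction (Roe's index formula applied to $g(\tfrac{D}{t+1})$).

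Next I would connect this to $F_{\widetilde N}'$. The operator $\tfrac{D}{t+1} + \tau$ is invertible (the complex $(L^2(\wedge^*\widetilde N), \tfrac{d}{t+1}, \tau)$ is an analytically controlled Hilbert Poincar\'e complex, so Higson–Roe's invertibility result from Section~\ref{sec Poincare complex} applies), hence $\mathrm{sgn}(\tfrac{D}{t+1} + \tau) = 2P_+(\tfrac{D}{t+1}+\tau) - 1$ is a symmetry, and similarly for the minus sign; their difference $P_+(\tfrac{D}{t+1}+\tau) - P_+(\tfrac{D}{t+1}-\tau)$ lies in $C^*(\widetilde N)^\Gamma$ by Lemma~\ref{lemma even functional cal for perturb}. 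The key point is then a functional-calculus homotopy, carried out within the unital algebra generated by $\tfrac{D}{t+1}$, $\tau$, and the resolvents, which are in $D^*(\widetilde N)^\Gamma$ by Lemma~\ref{lemma odd functional cal for perturb}: deform the normalizing-function picture into the ``$D+\tau$'' picture. Concretely, for $s \in [0,1]$ one interpolates $g$ between $x \mapsto \tfrac{x}{\sqrt{1+x^2}}$ and a function whose sign at $\tfrac{D}{t+1}+\tau$ agrees with $\mathrm{sgn}$, keeping $G_s^2 = 1$ modulo $C^*$; since positive-projection classes are locally constant under norm-continuous deformations of an invertible (equivalently of an essential symmetry) through invertibles, the relative index is unchanged. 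One checks that at $s=1$ the relative index $P_+(-G_1\tau G_1) - P_+(\tau)$ equals $P_+(\tfrac{D}{t+1}+\tau) - P_+(\tfrac{D}{t+1}-\tau)$, e.g. by noting $-G_1\tau G_1$ and $\mathrm{sgn}(\tfrac{D}{t+1}+\tau)$ are unitarily conjugate modulo $C^*$ and computing $P_+(\tau)$ against the baseline. All of these operations depend norm-continuously and with propagation controlled uniformly in $t \in [0,\infty)$ — this is where one uses that $\hat\phi$ can be taken compactly supported and that propagation of $\phi(\tfrac{D}{t+1})$ shrinks as $t \to \infty$ — so the $s$-homotopy assembles into a homotopy of paths in $C^*_L(\widetilde N)^\Gamma$ and yields $[F_{\widetilde N}'] = [F_{\widetilde N}]$ in $K_0(C^*_L(\widetilde N)^\Gamma)$.

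The main obstacle I anticipate is the bookkeeping needed to make the functional-calculus homotopy simultaneously (i) preserve the property of being an involution modulo $C^*(\widetilde N)^\Gamma$ at every stage, (ii) stay norm-continuous in $s$, and (iii) remain uniformly controlled in the localization variable $t$ so that nothing escapes $C^*_L(\widetilde N)^\Gamma$ or $D^*_L(\widetilde N)^\Gamma$ in the limit $t \to \infty$. The cleanest route is probably to fix an explicit two-parameter family of odd functions $\psi_s$ ($s \in [0,1]$) with $\psi_0 = g$, $\psi_1$ equal to $g$ outside a neighborhood of $0$ but $=\mathrm{sgn}$ near $0$, all with $|\psi_s| \le 1$ and $1 - \psi_s^2$ compactly supported, approximate each $\psi_s(\tfrac{D}{t+1})$ by functions with compactly supported Fourier transform uniformly in $s$, and then invoke exactly the continuity-of-$P_+$ lemmas already used in Subsection~\ref{subsec homo inva of the signature class}. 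I would also double-check the $P_+(\tau)$ terms match: both definitions subtract the ``constant'' symmetry $\tau$, so the baseline cancels and only the genuine index content remains, which is what makes the two formal differences represent the same class.
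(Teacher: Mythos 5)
Your outline gets the easy parts right ($G_{\widetilde N}(t)$ is a genuine symmetry, $F_{\widetilde N}(t)$ is the relative index of the pair $(-G\tau G,\ \tau)$, and membership of $F'_{\widetilde N}$ in $C^*_L(\widetilde N)^\Gamma$ follows from Lemma \ref{lemma even functional cal for perturb} together with the propagation estimates), but the heart of the lemma is precisely the step you defer: identifying the relative index of $(-G_{\widetilde N}(t)\tau_{\widetilde N} G_{\widetilde N}(t),\ \tau_{\widetilde N})$ with $P_+(\frac{D_{\widetilde N}}{t+1}+\tau_{\widetilde N})-P_+(\frac{D_{\widetilde N}}{t+1}-\tau_{\widetilde N})$. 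Saying that ``$-G_1\tau G_1$ and $\mathrm{sgn}(\frac{D}{t+1}+\tau)$ are unitarily conjugate modulo $C^*$'' is an assertion, not a proof, and it would not suffice even if verified: conjugacy of the individual symmetries modulo the ideal does not identify the classes of the formal differences; you must conjugate the \emph{pair} simultaneously by a single invertible element of $D^*_L(\widetilde N)^\Gamma$ (or exhibit a path of differences of projections). The paper supplies exactly this missing element: since $(G_{\widetilde N}(t)-\tau_{\widetilde N})^2=2\bigl(1+h(\frac{D_{\widetilde N}}{t+1})\bigr)\geq 2$, the operator $1-\tau_{\widetilde N}G_{\widetilde N}(t)$ is invertible, so $1$ is not in the spectrum of $\tau_{\widetilde N}G_{\widetilde N}(t)$ and $W(t):=\sqrt{\tau_{\widetilde N}G_{\widetilde N}(t)}\,G_{\widetilde N}(t)$ is defined; one then checks the exact identities $W(t)^*(-G\tau G)W(t)=-\tau G\tau=\bigl(\frac{D_{\widetilde N}}{t+1}+\tau_{\widetilde N}\bigr)\bigl|\frac{D_{\widetilde N}}{t+1}+\tau_{\widetilde N}\bigr|^{-1}$ and $W(t)^*\tau W(t)=G=\bigl(\frac{D_{\widetilde N}}{t+1}-\tau_{\widetilde N}\bigr)\bigl|\frac{D_{\widetilde N}}{t+1}-\tau_{\widetilde N}\bigr|^{-1}$, which rest on the exact anticommutation $D_{\widetilde N}\tau_{\widetilde N}+\tau_{\widetilde N}D_{\widetilde N}=0$ (so that $g(\frac{D}{t+1})\pm\tau h(\frac{D}{t+1})$ \emph{are} the phases of $\frac{D}{t+1}\pm\tau$, exactly and not just modulo $C^*$). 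Your proposal never isolates these identities, yet they are the entire content of the lemma.

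Independently, the proposed deformation of normalizing functions is not viable as stated. A function $\psi_1$ that is ``$=\mathrm{sgn}$ near $0$'' is discontinuous at $0$, and $\frac{D_{\widetilde N}}{t+1}$ has no spectral gap at $0$ in general --- the lack of invertibility of $D_{\widetilde N}$ is the very reason the perturbations $\pm\tau_{\widetilde N}$ appear --- so $\psi_1(\frac{D_{\widetilde N}}{t+1})$ is not defined in $D^*(\widetilde N)^\Gamma$. More fundamentally, deforming the function applied to $\frac{D_{\widetilde N}}{t+1}$ cannot by itself transform expressions in $\frac{D_{\widetilde N}}{t+1}$ and $\tau_{\widetilde N}$ into spectral projections of $\frac{D_{\widetilde N}}{t+1}\pm\tau_{\widetilde N}$: the two pictures apply functional calculus to different operators, and the bridge between them is the anticommutation identity above, not a choice of normalizing function. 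The conjugation argument also disposes of the uniformity issues you worry about, since $W(t)$ is produced from $\tau_{\widetilde N}$ and $G_{\widetilde N}(t)$ by continuous functional calculus, hence defines a norm-continuous path in $D^*_L(\widetilde N)^\Gamma$ and conjugation by it leaves the class in $K_0(C^*_L(\widetilde N)^\Gamma)$ unchanged.
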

\begin{proof}

 	Note that
 	\[
 	( G_{\widetilde{N}}(t)- \tau_{\widetilde{N}} )^2= 2( 1+ f(\frac{D_{\widetilde{N}}}{t+1}) )
 	\]
 	are invertible with uniform lower bound $2$ for $t\in [0,+\infty)$. Thus
 	\[
 	I-\tau_{\widetilde{N}}G_{\widetilde{N}}=\tau_{\widetilde{N}}( \tau_{\widetilde{N}}-G_{\widetilde{N}})
 	\]
 	is invertible in $C^*_L(\widetilde{N})^{\Gamma,+}$, which implies that $1$ does not belongs to the spectrum of $\tau_{\widetilde{N}}G_{\widetilde{N}}$.

	Direct computation shows that
 	\begin{eqnarray*}
 		 \tau_{\widetilde{N}}\sqrt{\tau_{\widetilde{N}}G_{\widetilde{N}}(t)} &=& \sqrt{\tau_{\widetilde{N}}G_{\widetilde{N}}(t)}^*\tau_{\widetilde{N}},\\
 		G_{\widetilde{N}}(t)\sqrt{\tau_{\widetilde{N}}G_{\widetilde{N}}(t)} &=& \sqrt{\tau_{\widetilde{N}}G_{\widetilde{N}}(t)}^*G_{\widetilde{N}}(t) .
 	\end{eqnarray*}
 	Let $W_{\widetilde{N}}(t)$ be
 	\[
 	\sqrt{\tau_{\widetilde{N}}G_{\widetilde{N}}(t)}G_{\widetilde{N}}(t).
 	\]
    Thus we have
 	\[
 		W_{\widetilde{N}}(t)^* (- G_{\widetilde{N}}(t) \tau_{\widetilde{N}} G_{\widetilde{N}}(t))W_{\widetilde{N}}(t)
 		= -\tau_{\widetilde{N}}G_{\widetilde{N}}(t)\tau_{\widetilde{N}} 
 		= \frac{\frac{D_{\widetilde{N}}}{t+1}+ \tau_{\widetilde{N}}}{|\frac{D_{\widetilde{N}}}{t+1}+ \tau_{\widetilde{N}}|},
 	\]
 and
 \[
 W_{\widetilde{N}}(t)^*(\tau_{\widetilde{N}}) W_{\widetilde{N}}(t) =G_{\widetilde{N}}(t)=\frac{\frac{D_{\widetilde{N}}}{t+1}- \tau_{\widetilde{N}}}{|\frac{D_{\widetilde{N}}}{t+1}- \tau_{\widetilde{N}}|}.
 \]
 	Hence we have
 	\begin{eqnarray*}
  &&W_{\widetilde{N}}(t)^*(P_+(- G_{\widetilde{N}}(t) \tau_{\widetilde{N}} G_{\widetilde{N}}(t)))-P_+(\tau_{\widetilde{N}}))W_{\widetilde{N}}(t)\\
&=& P_+(\frac{\frac{D_{\widetilde{N}}}{t+1}+ \tau_{\widetilde{N}}}{|\frac{D_{\widetilde{N}}}{t+1}+ \tau_{\widetilde{N}}|})
-P_+(\frac{\frac{D_{\widetilde{N}}}{t+1}- \tau_{\widetilde{N}} }{|\frac{D_{\widetilde{N}}}{t+1}- \tau_{\widetilde{N}}| })\\
&=&P_+(\frac{D_{\widetilde{N}}}{t+1}+ \tau_{\widetilde{N}})- P_+(\frac{D_{\widetilde{N}}}{t+1}- \tau_{\widetilde{N}}).
 	\end{eqnarray*}
 	This finishes the proof.
\end{proof}

\begin{lemma}\label{lemma equiv of F smooth and op}
We have $[F_{C^\infty,\widetilde{N}}]= [F'_{\widetilde{N}}]\in K_0(C^*_L(\widetilde{N})^\Gamma).$
\end{lemma}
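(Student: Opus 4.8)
The plan is to identify the two $K$-homology classes by tracking what each representative ``is'' at every scale parameter $t\in[0,\infty)$, and observing that they are literally built from the same data. Recall that $F_{\widetilde N}'$ (Lemma \ref{lemma equivalent def of F op}) is the path $t\mapsto P_+(\tfrac{D_{\widetilde N}}{t+1}+\tau_{\widetilde N})-P_+(\tfrac{D_{\widetilde N}}{t+1}-\tau_{\widetilde N})$, i.e.\ the difference of positive projections of $\overline D\pm\overline S$ for the unbounded analytically controlled Hilbert Poincar\'e complex $(L^2(\wedge^*(\widetilde N)),\tfrac{d_{\widetilde N}}{t+1},\tau_{\widetilde N})$. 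On the other hand, $F_{C^\infty,\widetilde N}$ is obtained, by the construction in Subsection \ref{subsec L 2 form and unbounded com}, by scaling the \emph{metric} of $\widetilde N$ by a factor $(n+1+t)$ and taking the difference of positive projections of $D_{\widetilde{N},n+t}\pm\tau_{\widetilde{N},n+t}$. The first step is therefore to make precise that scaling the Riemannian metric of $\widetilde N$ by $\lambda^2$ has the effect, on the deRham complex, of replacing $d_{\widetilde N}$ by $d_{\widetilde N}$ and the Hodge $\star$ (hence $\tau_{\widetilde N}$) by a degreewise rescaling, so that after conjugating by the natural degreewise-unitary identification of the Hilbert spaces of forms for the two metrics, the operator $D_{\widetilde{N},\lambda}+\tau_{\widetilde{N},\lambda}$ becomes unitarily equivalent to $c(\lambda)(\tfrac{D_{\widetilde N}}{\mu(\lambda)}+\tau_{\widetilde N})$ for suitable positive scalars, where positive projections are unchanged by the positive scalar $c(\lambda)$. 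Concretely, on $p$-forms the metric rescaling $g\mapsto \lambda^{-2}g$ multiplies the $L^2$ inner product and the codifferential by powers of $\lambda$, and the net effect on $D\pm\tau$ after the unitary normalization is exactly a scaling $D_{\widetilde N}\mapsto D_{\widetilde N}/(t+1)$ in the relevant parameter; I would record this as a short computation with the Hodge star.

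Once this identification is in place, the two paths $F_{C^\infty,\widetilde N}$ and $F_{\widetilde N}'$ become, up to a reparametrization of $[0,\infty)$ and a uniformly-bounded-propagation perturbation, the \emph{same} path of differences of projections in $C^*_L(\widetilde N)^\Gamma$. Since a continuous reparametrization of the localization parameter and a homotopy through paths with propagation tending to $0$ do not change the class in $K_0(C^*_L(\widetilde N)^\Gamma)$, we conclude $[F_{C^\infty,\widetilde N}]=[F_{\widetilde N}']$. Here I would invoke the general principle, already used repeatedly in Subsection \ref{subsec homo inva of the signature class} and in the construction of $F_{C^\infty,\widetilde N}$, that two such paths which agree at integer points and are connected at non-integer points by the homotopy-invariance construction of Higson--Roe represent the same localization class; indeed the alternative description $F'_{C^\infty,\widetilde N}$ given in Subsection \ref{subsec L 2 form and unbounded com} is essentially this observation, and $F_{\widetilde N}'$ is the ``continuous scaling'' version of the very same family.

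The main obstacle I anticipate is bookkeeping rather than conceptual: one must check that the unitary degreewise identification between forms for the rescaled metric and forms for the original metric is $\Gamma$-equivariant, has propagation zero (it is multiplication by a smooth positive function determined by the metric, hence local), and carries analytically controlled operators to analytically controlled operators, so that the unitary equivalence is an equivalence of analytically controlled Hilbert Poincar\'e complexes over $\widetilde N$ and thus descends to $K$-theory. One must also verify that, after scaling the metric back to the original one as prescribed in the construction of $F_{C^\infty,\widetilde N}$, the propagation of the resulting path still tends to $0$ as $t\to\infty$; this follows because the scaling factor $(t+1)$ in $D_{\widetilde N}/(t+1)$ forces the propagation of $\phi(D_{\widetilde N}/(t+1))$ to $0$ by the finite-propagation-speed estimate already quoted in Subsection \ref{sub index theo app of k homo}. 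Assembling these routine verifications, together with Lemma \ref{lemma equivalent def of F op} to replace $F_{\widetilde N}'$ by $F_{\widetilde N}$ where convenient, completes the argument.
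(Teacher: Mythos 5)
Your proposal is correct and follows the same route as the paper, which simply asserts that the lemma is "direct from comparing the definitions"; you spell out the content of that comparison, namely that the metric rescaling in the construction of $F_{C^\infty,\widetilde N}$ corresponds, after the degreewise unitary identification coming from the Hodge star rescaling, to replacing $D_{\widetilde N}$ by $D_{\widetilde N}$ divided by a factor tending to infinity, so that the two paths of differences of positive projections agree up to reparametrization and a homotopy that does not change the class in $K_0(C^*_L(\widetilde N)^\Gamma)$. The extra verifications you list (equivariance, zero propagation of the identification, propagation of the path tending to $0$) are exactly the routine points implicit in the paper's one-line proof.
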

\begin{proof}
It is direct from the comparing of the definition of $F_{C^\infty,\widetilde{N}}$ with the definition of $F'_{\widetilde{N}}$.
 \end{proof}

\section{Higher $\rho$ invariant}\label{sec higher rho}

We are now ready to recall those two approaches to the higher $\rho$ invariant of the signature operator associated to the homotopy equivalence in the literature, and show that they are equivalent to each other.

\subsection{Defnition of $\rho_{op}$}\label{sec def of rho op}
In this subsection, we recall the definition of the higher $\rho$ invariant associated to the homotopy equivalence given by Piazza, Schick in \cite{PS16ThesurgeryexactsequenceK-theoryandthesignatureoperator}.

 Piazza, Schick defined the higher $\rho$ invariant of the signature operator associated to a homotopy equivalence of $n$ dimensional manifolds $f: M \to N$ by the Hilsum-Skandalis perturbation (see \cite{HS92Invarianceparhomotopiedelasignatureacoefficientsdansunfibrepresqueplat} for detailed reference). Let $\Gamma$ be the fundamental group of $M$ and $N$, $\widetilde{M}$ and $\widetilde{N}$ be the universal covering spaces  of $M$ and $N$ respectively. Let $L^2(\wedge^p ( \widetilde{M}))$ and $L^2(\wedge^p (\widetilde{N}))$ be Hilbert spaces of $L^2$ $p-$forms on $\widetilde M$ and $\widetilde N$ respectively. Set $d_{\widetilde M}$, $d_{\widetilde N}$ to be the $\Gamma$-invariant differentials and $\tau_{\widetilde M}$, $\tau_{\widetilde N}$ to be the chiral dualities. Let $\widetilde f:\widetilde M\to\widetilde N$ be the induced homotopy equivalence.

  Note that $\widetilde{f}: L^2(\wedge^p (\widetilde{N}))\to L^2(\wedge^p ( \widetilde{M}))$ is not a bounded linear operator in general. However, by an embedding procedure (\cite{HS92Invarianceparhomotopiedelasignatureacoefficientsdansunfibrepresqueplat}),  Hilsum and Skandalis showed that $\widetilde f$ induces a bounded operator $T_{\widetilde f}:  L^2(\wedge^p (\widetilde{N}))\to L^2(\wedge^p ( \widetilde{M}))$ in \cite{HS92Invarianceparhomotopiedelasignatureacoefficientsdansunfibrepresqueplat}. Let $D_{\widetilde M}$ be $d_{\widetilde M}+d_{\widetilde M}^*$ and $D_{\widetilde N}$ be $d_{\widetilde N}+d_{\widetilde N}^*$. Wahl  modified $T_{\widetilde f}$ to be  $\mathcal{T}_{\widetilde f}:= \phi_\epsilon (D_{\widetilde M}) T_{\widetilde f} \phi_\epsilon (D_{\widetilde N}), $ where $\phi:\mathbb{R}\to [0,1]$  is an even $C_c^\infty(\mathbb{R})$ function, such that $\text{Supp}(\phi) \subset [-\frac{1}{2}, \frac{1}{2}]$, $\phi(x)=1 $ for $x\in [-\frac{1}{4}, \frac{1}{4}]$, and $\phi_\epsilon (x)= \phi(\frac{x}{\epsilon})$ (\cite{W13Higherrho-invariantsandthesurgerystructureset}). We will not recall all the details of Wahl's construction of $\mathcal{T}_{\widetilde f}$, but only mention that it is a controlled chain homotopy equivalence with finite propagation from $(L^2(\wedge^* (\widetilde{N})), d_{\widetilde{N}}, \tau_{\widetilde{N}})$ to $(L^2(\wedge^* ( \widetilde{M})) , d_{\widetilde{M}}, \tau_{\widetilde{M}})$.

In the following, we denote by $\mathcal{T}_{\widetilde f}^*$ the adjoint of $\mathcal{T}_{\widetilde f}$ according to the inner product of  $L^2(\wedge^* ( \widetilde{M}))$ and $L^2(\wedge^* (\widetilde{N}))$.

The higher $\rho$ invariant of  Piazza and  Schick is then constructed as follows.

Let
\[
\tau=\left(
\begin{array}{cc}
\tau_{\widetilde N} & \\
 & -\tau_{\widetilde M}
\end{array}
\right)
\]
be the chiral duality operator of $\widetilde N \sqcup -\widetilde M$. A basic fact is that there exist bounded operators $y: \wedge^p(\widetilde N) \to \wedge^{p-1}(\widetilde N)$ with finite propagation (cf. \cite[Lemma 2.2]{W13Higherrho-invariantsandthesurgerystructureset}) such that
\[
 y':=-\tau y\tau =(-1)^{n+1} y,
\]
\[
1 -\tau_N \mathcal{T}_{\widetilde f}^* \tau_{\widetilde M} \mathcal{T}_{\widetilde f} = d_{\widetilde N} y + y d_{\widetilde N}.
\]

Let $\gamma \omega = (-1)^k \omega$ for $\omega\in  L^2(\wedge^k (\widetilde{N} \cup \widetilde{M}) )$. For $0\leq \alpha, \beta\leq 1, \ (\beta-1)\alpha=0$, set
\[
R_\beta(\widetilde f) = \left(
\begin{array}{cc}
1 & 0 \\
\beta \mathcal{T}_{\widetilde f} & 1
\end{array}
\right)\]
and
\[ L_{\alpha, \beta}({\widetilde f})=  \left(
\begin{array}{cc}
1- \beta^2 \tau_{\widetilde N} \mathcal{T}_{\widetilde f}^* \tau_{\widetilde M} \mathcal{T}_{\widetilde f}  & -(1-i\alpha \gamma y)\beta \tau_{\widetilde N} \mathcal{T}_{\widetilde f}^* \tau_{\widetilde M} \\
\beta  \mathcal{T}_{\widetilde f} (1+i\alpha \gamma y) & 1
\end{array}
\right).
\]
$L_{\alpha, \beta}({\widetilde f})$ is invertible when $\alpha$ is small enough since
\[
L_{\alpha, \beta}({\widetilde f})= R'_\beta({\widetilde f}) R_\beta({\widetilde f}) +  \left(
\begin{array}{cc}
0  & i\alpha \gamma y\beta \tau_N \mathcal{T}_{\widetilde f}^* \tau_M \\
\beta  \mathcal{T}_{\widetilde f} i\alpha \gamma y & 0
\end{array}
\right),
\]
where $ R'_\beta({\widetilde f})$ is the matrix
\[
\begin{pmatrix}
1 & -\beta \tau_{\widetilde N} \mathcal{T}_{\widetilde f}^* \tau_{\widetilde M} \\
0 & 1
\end{pmatrix}.
\]

By construction, the operator $\tau L_{\alpha, \beta}({\widetilde f})$ is self-adjoint.
Set $\mathcal{U}_{\alpha, \beta}({\widetilde f})= |\tau L_{\alpha, \beta}({\widetilde f})|^{\frac{1}{2}}$, and
\[
d_{\alpha, \beta }({\widetilde f}) = \mathcal{U}_{\alpha, \beta}({\widetilde f}) \left(
\begin{array}{cc}
d_{\widetilde N} &  -i\alpha \tau_{\widetilde N} \mathcal{T}_{\widetilde f}^* \tau_{\widetilde M}\gamma\\
0   &  d_{\widetilde M}
\end{array}
\right) \mathcal{U}_{\alpha, \beta}({\widetilde f})^{-1}.
\]
Then $d_{\alpha, \beta}(\widetilde f)$ anticommutes with the operator
\[
S_{\alpha, \beta}(\widetilde f)=  \mathcal{U}_{\alpha, \beta}({\widetilde f})\frac{\tau L_{\alpha,\beta}}{|\tau L_{\alpha,\beta}|} \mathcal{U}_{\alpha, \beta}({\widetilde f})^{-1}=\frac{\tau L_{\alpha,\beta}}{|\tau L_{\alpha,\beta}|}.
\]
Set
\[D_{\alpha, \beta}({\widetilde f})=d_{\alpha, \beta }({\widetilde f}) + d^*_{\alpha, \beta }({\widetilde f})=d_{\alpha, \beta }({\widetilde f})-S_{\alpha, \beta}({\widetilde f})d_{\alpha, \beta }({\widetilde f})S_{\alpha, \beta}({\widetilde f}). \]
By the theory of Hilsum and Skandalis in \cite{HS92Invarianceparhomotopiedelasignatureacoefficientsdansunfibrepresqueplat}, the operator $D_{\alpha, \beta}({\widetilde f})$ is invertible.

Recall
\[
\frac{D_{\alpha, \beta}({\widetilde f}) }{|D_{\alpha,\beta}({\widetilde f})| }
\]
can be decomposed as
\[
\left(\begin{array}{cc}
0 & U_+ \\
U_- & 0
\end{array}
\right)
\]
according to the $\pm 1$ eigenspaces of $S_{\alpha, \beta}({\widetilde f})$.
By \cite{W13Higherrho-invariantsandthesurgerystructureset} and \cite{PS16ThesurgeryexactsequenceK-theoryandthesignatureoperator}, we know that $\rho_{op}(f):=U_+$ defines a class in  $K_1(D^*(\widetilde{N})^\Gamma)$. We denote this class by $[\rho_{op}(f)]$.

For more details of the construction of  $D_{\alpha, \beta}({\widetilde f})$
and $S_{\alpha, \beta}({\widetilde f})$, see \cite{HS92Invarianceparhomotopiedelasignatureacoefficientsdansunfibrepresqueplat, PS07JNCG, W13Higherrho-invariantsandthesurgerystructureset}.  When no confusion is likely  to arise, we will denote  $D_{\alpha, \beta}({\widetilde f})$,
and $S_{\alpha, \beta}({\widetilde f})$ simply by $D_{\alpha, \beta}$ and $S_{\alpha, \beta}$ respectively.

We now describe  the image of $[\rho_{op}(f)]$ under $\text{Ind}_L$ by applying Roe's index formula (see \cite{R93CoarsecohomologyandindextheoryoncompleteRiemannianmanifolds}). As before, set $g(x)=\frac{x}{\sqrt{1+x^2}},\ h=\sqrt{1-g^2}$. Note that $D_{\alpha, \beta}$ is invertible. Set operators as
\[
	G_{\alpha, \beta}(t)=g(\frac{D_{\alpha, \beta}}{t}) -S_{\alpha, \beta } h(\frac{D_{\alpha, \beta}}{t}) , t\in [0,+\infty).
\]
Then the path $t\mapsto G_{\alpha, \beta}(t)$ is a symmetry in $D^*_L(\widetilde{N})^{\Gamma}$, and
\[G_{\alpha, \beta}S_{\alpha, \beta}+S_{\alpha, \beta } G_{\alpha, \beta} \in C^*_{L,0}(\widetilde{N})^\Gamma.\]
The following formal difference
\[
[P_+(-G_{\alpha, \beta }S_{\alpha, \beta } G_{\alpha, \beta } ) ]-[P_+(\tau_{\alpha, \beta })]
\]
then defines an element in $K_0(C_{L, 0}^*(\widetilde{N})^G).$
 We actually have
\[
\text{Ind}_L [\rho_{op}(f)]=[P'_+(-G_{\alpha, \beta }S_{\alpha, \beta } G_{\alpha, \beta } ) ]-[P_+(S_{\alpha, \beta })].
\]

At last, we remind the reader to notice that
\[
(L^2(\wedge^{*+1} ( \widetilde{N}))\oplus L^2(\wedge^* ( \widetilde{M})), d_{\alpha, \beta}, S_{\alpha, \beta})
\]
is an analytically controlled Hilbert Poincar\'e complex over $\widetilde{N}$. This is implied by the construction and computation in \cite{W13Higherrho-invariantsandthesurgerystructureset} and \cite{PS16ThesurgeryexactsequenceK-theoryandthesignatureoperator}.

\subsection{Definition of $\rho_{PL}$}\label{sec def of rho PL}
In this subsection, we recall the definition of the representative  of the higher $\rho$ invariant given by Weinberger, Xie and Yu in \cite{WXY18Additivityofhigherrhoinvariantsandnonrigidityoftopologicalmanifolds}. We will denote this representative by $\rho_{PL}(f)$. Originally, Weinberger, Xie and Yu used the simplicial chain complex to construct $\rho_{PL}(f)$. Equivalently, we will use the simplicial cochain complex instead.

Let $f: M \to N$ be a homotopy equivalence from the smooth manifold $M$ to the smooth manifold $N$.
Take a $\Gamma$-invariant triangulation for $\widetilde{M}$ and $ \widetilde{N}$. Then the corresponding $L^2$-completion gives the following two analytically controlled Hilbert Poincar\'e complexes
\[
 (L^2(E^*({\widetilde{N}})) ,b_{\widetilde{N}}, S_{\widetilde{N}} )\ \text{and} \ (L^2(E^*(\widetilde{M}))  ,b_{\widetilde{M}}, S_{\widetilde{M}} ).
\]
Let $\mathcal{F}$ be the simplicial map of ${\widetilde{f}}$. Then \cite[Lemma 4. 35]{WXY18Additivityofhigherrhoinvariantsandnonrigidityoftopologicalmanifolds} shows that
\[\mathcal{F}: (L^2(E^*({\widetilde{N}})) ,b_{\widetilde{N}}, S_{\widetilde{N}} ) \to (L^2(E^*(\widetilde{M}))  ,b_{\widetilde{M}}, S_{\widetilde{M}} )\]
is an analytically controlled homotopy equivalence.

Set
\[
b= \left(\begin{array}{cc}
b_{\widetilde{N}} & \\
 & b_{\widetilde{M}}
\end{array} \right)\ \text{and}\ S= \left(\begin{array}{cc}
S_ {\widetilde{N}}& \\
 & -S_{\widetilde{M}}
\end{array} \right).
\]
Recall that in Subsection \ref{subsec L2 simplicial and bounded complex}, we have defined the $K$-homology class of the signature operator of
\[( L^2(E^*(\widetilde{N}))  \oplus L^2(E_{*}(\widetilde{M})) ,b, S )\]
in $K_0(C^*_{L}(\widetilde N)^\Gamma),$ which is represented by a path of differences of projections. We will denote the class as $[F_{PL}],$ and the path of differences representing it as $F_{PL}.$ Note that $[F_{PL}]= [F_{PL,\widetilde N}]-[F_{PL,\widetilde M}].$

Then the higher $\rho$ invariant, $[\rho_{PL}(f)],$
is defined as the class in $K_0(C^*_{L,0}(\widetilde{M})^\Gamma)$  represented by the following path of differences of projections
\[
\rho_{PL}(f):=\left\{
\begin{array}{cc}
P[\mathcal{F}](3-t)-Q[\mathcal{F}](3-t) &  t\in [0,3], \\
F_{PL}(t-3) & t\in [3,\infty].	
\end{array}
\right.
\]
where $P[\mathcal{F}]$ and $Q[\mathcal{F}]$ are defined in Subsection \ref{subsec homo inva of the signature class}.

\subsection{Definition of $\rho_{C^\infty}$}\label{sec def of rho smooth}
In this subsection, we  give the definition of the representative of the higher $\rho$ invariant by considering  the deRham complex. We will denote this class by $\rho_{C^\infty}$. This definition has been examined in \cite{JL18additivehigherrhoinvariantforstructuregroupindifferentialpointofview}.

Consider analytically controlled Hilbert Poincar\'e complexes
\[
( L^2(\wedge^{*} ( \widetilde{N})),
d_{\widetilde{N}} , \tau_{\widetilde{N}})\  \text{and} \  ( L^2(\wedge^{*} ( \widetilde{M})),
d_{\widetilde{M}} , \tau_{\widetilde{M}}).
\]
As shown in Subsection \ref{sec def of rho op},
\[\mathcal{T}_{\widetilde f}:
( L^2(\wedge^{*} ( \widetilde{N})),
d_{\widetilde{N}} , \tau_{\widetilde{N}})\to  ( L^2(\wedge^{*} ( \widetilde{M})),
d_{\widetilde{M}} , \tau_{\widetilde{M}}).
\]
is an analytically controlled homotopy equivalence.

We set
\[d=\left(
\begin{array}{cc}
d_{\widetilde{N}} & 0 \\
0   & d_{\widetilde{M}}
\end{array}
\right)\ \text{and}\ \tau= \left(
\begin{array}{cc}
\tau_{\widetilde{N}} & 0 \\
0   & -\tau_{\widetilde{M}}
\end{array}
\right). \]
Consider the $\Gamma$-equivariant analytically controlled Hilbert Poincar\'e complex over $\widetilde{N}$
\[
( L^2(\wedge^{*} ( \widetilde{N}))\oplus L^2(\wedge^{*} ( \widetilde{M})),
d , \tau).
\]
We denote the  path of differences of projections representing the $K$-homology class of the signature operator of this complex, defined in Subsection \ref{subsec L 2 form and unbounded com}, by $F_{C^\infty},$ and the $K$-homology class of the signature operator by $[F_{C^\infty}].$ Note that by definition, $[F_{C^\infty}]$  equals $[F_{C^\infty, \widetilde N}]-[F_{C^\infty,\widetilde M}]$.

Then the higher $\rho$ invariant associated to $f$, $[\rho_{C^\infty}(f)]$
is the class  in $K_0(C^*_{L,0}(\widetilde N)^\Gamma)$ represented by the following path of differences
\[
\rho_{C^\infty}(f):=\left\{
\begin{array}{cc}
P[\mathcal{T}_{\widetilde f}](3-t)-Q[\mathcal{T}_{\widetilde f}](3-t) &  t\in [0,3],\\
F_{C^\infty}(t-3) & t\in [3,\infty].	
\end{array}
\right.
\]

\subsection{Equivalence between $\rho_{PL}$ and $\rho_{C^\infty}$}\label{sec equiv rhp PL and smoth}
In this subsection, we prove the equivalence between $\rho_{PL}(f)$ and $\rho_{C^\infty}(f)$ for homotopy equivalence $f: M\to N$.

\begin{theorem}\label{theo equiv of rho PL and smooth}
With the notations as above, we have
\[[\rho_{PL}(f)]= [\rho_{C^\infty}(f)  ] \in K_0(C^*_L(\widetilde{N})^\Gamma).\]
\end{theorem}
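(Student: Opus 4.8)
The plan is to compare $\rho_{PL}(f)$ and $\rho_{C^\infty}(f)$ piece by piece, using the fact that both are assembled from two parts: a ``homotopy segment'' on $t\in[0,3]$ built from the controlled homotopy equivalences $\mathcal{F}$ and $\mathcal{T}_{\widetilde f}$ respectively, and a ``$K$-homology tail'' on $t\in[3,\infty]$ given by $F_{PL}$ and $F_{C^\infty}$. By Lemma \ref{lemma equiv of F PL and smooth} (applied over $\widetilde N$ and $\widetilde M$, then combined via $[F_{PL}]=[F_{PL,\widetilde N}]-[F_{PL,\widetilde M}]$ and $[F_{C^\infty}]=[F_{C^\infty,\widetilde N}]-[F_{C^\infty,\widetilde M}]$), the two tails already represent the same class in $K_0(C^*_L(\widetilde N)^\Gamma)$, in fact via an explicit path of differences of projections coming from the chain homotopy equivalence $\sqcup_n H_n$ between the deRham and simplicial complexes. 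So the only thing to reconcile is the behavior on the homotopy segment together with how it is glued to the tail.

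First I would set up the total homotopy equivalence of analytically controlled Hilbert Poincar\'e complexes
\[
\mathcal{F}\oplus \mathcal{T}_{\widetilde f}\colon \bigl(L^2(E^*(\widetilde N))\oplus L^2(\wedge^*(\widetilde N)),\, b_{\widetilde N}\oplus d_{\widetilde N},\, S_{\widetilde N}\oplus \tau_{\widetilde N}\bigr)\to \bigl(L^2(E^*(\widetilde M))\oplus L^2(\wedge^*(\widetilde M)),\, b_{\widetilde M}\oplus d_{\widetilde M},\, S_{\widetilde M}\oplus \tau_{\widetilde M}\bigr),
\]
together with the deRham--to--simplicial equivalence $H_{\widetilde N}\oplus H_{\widetilde M}$ on each side. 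The key point is that the construction in Subsection \ref{subsec homo inva of the signature class} is functorial enough that composing homotopy equivalences produces a concatenation of the corresponding paths $P[\,\cdot\,]-Q[\,\cdot\,]$ of differences of projections, up to a controlled homotopy. Concretely, I would build a two-parameter family of analytically controlled Hilbert Poincar\'e complexes over $\widetilde N$ (with one parameter running the $[0,3]$ homotopy-segment deformation and the other interpolating the duality operators $S$ and $\tau$ via $H$), show every complex in the family is genuinely analytically controlled, and conclude from Lemma \ref{lemma odd functional cal for perturb} and Lemma \ref{lemma even functional cal for perturb} that the associated differences of positive projections lie in $C^*(\widetilde N)^\Gamma$ throughout. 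This produces a homotopy, rel endpoints, from the concatenation ``$\rho_{PL}$-segment followed by $F_{PL}$-tail'' to ``$\rho_{C^\infty}$-segment followed by $F_{C^\infty}$-tail'', i.e. a path in $C^*_{L,0}(\widetilde M)^\Gamma$ realizing the desired equality after scaling metrics back.

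The main obstacle I anticipate is bookkeeping the compatibility between the two conjugation schemes at the gluing point $t=3$: the $\rho_{PL}$ path ends at $P[\mathcal{F}](0)-Q[\mathcal{F}](0)=P_+(B+S)-P_+(B-S)$ while the tail $F_{PL}$ starts at the same expression, and similarly on the $C^\infty$ side; but to match the two $K$-homology tails one must also carry along the simplicial-to-deRham interpolation, and this interpolation must be checked to commute (up to controlled homotopy) with the refinement/rescaling family that defines $F_{PL,\widetilde N}$ versus $F_{C^\infty,\widetilde N}$. In other words, the hard part is verifying that the square of deformations ``(homotopy segment) $\times$ (deRham $\leftrightarrow$ PL)'' can be filled in by analytically controlled Hilbert Poincar\'e complexes — this is precisely where one needs the uniform bounded geometry of the successive subdivisions (as in Subsection \ref{subsec L2 simplicial and bounded complex}) so that all the operators involved stay in $D^*(\widetilde N)^\Gamma$ with the relevant differences in $C^*(\widetilde N)^\Gamma$. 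Once that square is filled, the theorem follows by concatenating paths and invoking the homotopy invariance of the signature class established in Subsection \ref{subsec homo inva of the signature class}.
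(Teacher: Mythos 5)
Your proposal is correct and follows essentially the same route as the paper: the paper also handles the tails via Lemma \ref{lemma equiv of F PL and smooth} and fills in the $[0,3]$ segment by observing that the canonical deRham-to-simplicial equivalence $H$ is an analytically controlled homotopy equivalence between the complexes with the deformed duality operators $S_{PL}(t)$ and $\tau_{C^\infty}(t)$ for every $t\in[0,3]$, which is exactly your two-parameter family of controlled Hilbert Poincar\'e complexes.
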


\begin{proof}
Recall the construction of $\rho_{PL}(f)$ and $\rho_{C^\infty}(f)$.
Again we consider the homotopy equivalence  from the deRham complex to the simplicial cochain complex
\[H:(L^2(E^*(\widetilde{N}))\oplus L^2( E_{*}(\widetilde{M})), b, S) \to (L^2(\wedge^* ( \widetilde{N})  ) \oplus L^2(\wedge^{*}  (\widetilde{N})  )  , d, \tau).\]
Set
\[
S_{PL}(t)=\left\{\begin{array}{cc}
	\left(
\begin{array}{cc}
(1-t)S_{\widetilde N} + t \mathcal{F}^* S_{\widetilde M} \mathcal{F} & 0\\
0 &  -S_{\widetilde M}
\end{array}
\right) & t\in [0,1],\\
\left(
\begin{array}{cc}
\cos (\frac{\pi}{2}(t-1)) \mathcal{F}^* S_{\widetilde M} \mathcal{F} & \sin (\frac{\pi}{2}(t-1)) \mathcal{F}^* S_{\widetilde M}  \\
\sin (\frac{\pi}{2}(t-1))   S_{\widetilde M} \mathcal{F}  & - \cos (\frac{\pi}{2}(t-1)) S_{\widetilde M}
\end{array}
\right) & t\in [1,2] ,\\
 \left(
\begin{array}{cc}
	0 &  e^{i\pi (t-2)}S_{\widetilde M} \mathcal{F}\\
	e^{-i\pi (t-2)}\mathcal{F}^*S_{\widetilde M}  & 0
\end{array}
\right) & t\in [2,3],
\end{array}
\right.
\]
and
\[
\tau_{C^\infty}(t)=\left\{\begin{array}{cc}
	\left(
\begin{array}{cc}
(1-t)\tau_{\widetilde N} + t \mathcal{T}_{\widetilde f}^* \tau_{\widetilde M} \mathcal{T}_{\widetilde f} & 0\\
0 &  -\tau_{\widetilde M}
\end{array}
\right) & t\in [0,1],\\
\left(
\begin{array}{cc}
\cos (\frac{\pi}{2}(t-1)) \mathcal{T}_{\widetilde f}^* \tau_{\widetilde M} \mathcal{T}_{\widetilde f} & \sin (\frac{\pi}{2}(t-1)) \mathcal{T}_{\widetilde f}^* \tau_{\widetilde M}  \\
\sin (\frac{\pi}{2}(t-1))   \tau_{\widetilde M} \mathcal{T}_{\widetilde f}   & - \cos (\frac{\pi}{2}(t-1)) \tau_{\widetilde M}
\end{array}
\right) & t\in [1,2],\\
 \left(
\begin{array}{cc}
	0 &  e^{i\pi (t-2)}\tau_{\widetilde M} \mathcal{T}_{\widetilde f}\\
	e^{-i\pi (t-2)}\mathcal{T}_{\widetilde f}^*\tau_{\widetilde M}   & 0
\end{array}
\right) & t\in [2,3].
\end{array}
\right.
\]
Then the canonical homotopy equivalence $H$, between the deRham complex and the simplicial cochain complex, induces an analytically controlled homotopy equivalence
\begin{eqnarray*}
H&:& (L^2(E^*(\widetilde{N}))\oplus L^2( E_{*}(\widetilde{M})), b, S_{PL}(t))\\
&&\to (L^2(\wedge^* ( \widetilde{N})  ) \oplus L^2(\wedge^{*}  (\widetilde{M} ) )  , d, \tau_{C^\infty}(t))
\end{eqnarray*}
for any $t\in [0,3].$
Then this lemma follows from Lemma \ref{lemma equiv of F PL and smooth}.
\end{proof}

\subsection{Equivalence between $\rho_{op}$ and $\rho_{C^\infty}$}\label{sec equiv rho op rho smooth}
In this subsection, we prove that for any homotopy equivalence $f: M \to N$, $[\rho_{C^\infty}(f) ]$ equals $\text{Ind}_L[\rho_{op}(f)]$ in $K_0(C^*_{L,0}(\widetilde{N})^\Gamma)$.

The following lemma describes $\text{Ind}_L(\rho_{op}(f) ).$
\begin{lemma}\label{lemma depicts rho op after comp}
The class $\text{Ind}_L[\rho_{op}(f)] \in K_0(C^*_{L,0}(\widetilde{N})^\Gamma)$ can be represented by the following path of differences:
\[
\left\{
\begin{array}{cc}
P_+(D_{\alpha,  \beta} + tS_{\alpha, \beta}) - P_+ (D_{\alpha, \beta} - tS_{\alpha, \beta}) & t\in [0,1],\\
P_+(D_{(2-t)\alpha, \beta} +  S_{(2-t)\alpha, \beta}) - P_+ (D_{(2-t)\alpha,  \beta} - S_{(2-t)\alpha, \beta}) & t\in [1,2],\\
P_+(D_{0, (3-t)\beta} + S_{0, (3-t)\beta}) - P_+ (D_{0,(3-t)\beta} - S_{0, (3-t)\beta}) & t\in [2,3],\\
F_{C^\infty}(t-3) & t\in [3, \infty).
\end{array}
\right.
\]
where $F_{C^\infty}$ is defined in Subsection \ref{subsec L 2 form and unbounded com}.
\end{lemma}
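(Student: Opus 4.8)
The plan is to deform the class $\text{Ind}_L[\rho_{op}(f)]$, which by Subsection \ref{sec def of rho op} is represented by the formal difference $[P_+(-G_{\alpha,\beta}S_{\alpha,\beta}G_{\alpha,\beta})]-[P_+(S_{\alpha,\beta})]$ for a fixed small $\alpha$ and $\beta=1$, through a sequence of homotopies in $K_0(C^*_{L,0}(\widetilde N)^\Gamma)$ until it lands on the displayed path. First I would invoke Lemma \ref{lemma equivalent def of F op} (applied to the analytically controlled Hilbert Poincar\'e complex $(L^2(\wedge^{*+1}(\widetilde N))\oplus L^2(\wedge^*(\widetilde M)), d_{\alpha,\beta}, S_{\alpha,\beta})$, which is genuinely analytically controlled by the remark at the end of Subsection \ref{sec def of rho op}): exactly as in the proof of Lemma \ref{lemma equivalent def of F op}, conjugating by the operator $W_{\alpha,\beta}(t)=\sqrt{S_{\alpha,\beta}G_{\alpha,\beta}(t)}\,G_{\alpha,\beta}(t)$ identifies the path $t\mapsto P_+(-G_{\alpha,\beta}(t)S_{\alpha,\beta}G_{\alpha,\beta}(t))-P_+(S_{\alpha,\beta})$ with the path $t\mapsto P_+(\tfrac{D_{\alpha,\beta}}{t+1}+S_{\alpha,\beta})-P_+(\tfrac{D_{\alpha,\beta}}{t+1}-S_{\alpha,\beta})$, and both represent $\text{Ind}_L[\rho_{op}(f)]$ in $K_0(C^*_{L,0}(\widetilde N)^\Gamma)$. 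This replaces the functional-calculus description by one in terms of positive projections of $D_{\alpha,\beta}\pm S_{\alpha,\beta}$, which is the form appearing in the displayed formula.

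Next I would rescale: the path $s\in[0,1]\mapsto P_+(D_{\alpha,\beta}+sS_{\alpha,\beta})-P_+(D_{\alpha,\beta}-sS_{\alpha,\beta})$, together with the reparametrized tail $t\mapsto P_+(\tfrac{D_{\alpha,\beta}}{t+1}+S_{\alpha,\beta})-P_+(\tfrac{D_{\alpha,\beta}}{t+1}-S_{\alpha,\beta})$, gives the first piece $t\in[0,1]$ of the claimed representative concatenated with its infinite tail; here one uses that $D_{\alpha,\beta}\pm sS_{\alpha,\beta}$ stays invertible (Higson–Roe, since $(D_{\alpha,\beta},S_{\alpha,\beta})$ comes from an analytically controlled Hilbert Poincar\'e complex and scaling $S$ only rescales, up to conjugation, the complex's duality), so the positive projections vary norm-continuously and the difference stays in $C^*$. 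Then for $t\in[1,2]$ and $t\in[2,3]$ I would let $\alpha$ vary linearly down to $0$ and then $\beta$ vary linearly down to $0$; since $L_{\alpha,\beta}(\widetilde f)$ is invertible for all small $\alpha$ and all $\beta\in[0,1]$ with $(\beta-1)\alpha=0$, the complexes $(L^2(\wedge^{*+1}(\widetilde N))\oplus L^2(\wedge^*(\widetilde M)),d_{\alpha,\beta},S_{\alpha,\beta})$ remain analytically controlled Hilbert Poincar\'e complexes and $D_{\alpha,\beta}\pm S_{\alpha,\beta}$ remain invertible along the deformation, so again the differences of positive projections form a continuous path in $C^*$. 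At $\alpha=\beta=0$ one has $d_{0,0}=d$, $S_{0,0}=\tau$ (the direct-sum operators on $L^2(\wedge^*(\widetilde N))\oplus L^2(\wedge^*(\widetilde M))$, after the harmless degree shift), so the endpoint of the $[2,3]$-piece is $P_+(D+\tau)-P_+(D-\tau)$, which is precisely $F_{C^\infty}(0)$ — matching the start of the tail $F_{C^\infty}(t-3)$. Finally I would check that the original tail (coming from the localization index formula for $\tfrac{D_{\alpha,\beta}}{t+1}$) is homotopic rel endpoints, inside $C^*_L(\widetilde N)^\Gamma$, to $F_{C^\infty}$: both are constructed by scaling the metric to infinity on the disjoint union $\sqcup_n\widetilde N_{n+t}$, and the homotopy invariance argument of Subsection \ref{subsec homo inva of the signature class} applied uniformly over $t\in[0,\infty)$ to the controlled homotopy equivalence $\mathcal T_{\widetilde f}$ (or rather to the identity-at-$\alpha=\beta=0$ degeneration) connects them; this is the same mechanism used in Lemma \ref{lemma equiv of F smooth and op}.

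The step I expect to be the main obstacle is verifying uniform analytic control along the two-parameter deformation in $(\alpha,\beta)$, specifically that there is a uniform lower bound on $|D_{\alpha,\beta}\pm S_{\alpha,\beta}|$ (equivalently on $|\tau L_{\alpha,\beta}(\widetilde f)|$) as $\alpha\downarrow 0$ and $\beta\downarrow 0$, so that the positive spectral projections depend norm-continuously on the parameters and the various differences genuinely lie in $C^*(\widetilde N)^\Gamma$ (and in $C^*_{L,0}$ after scaling). This requires unwinding Wahl's estimates on $\mathcal T_{\widetilde f}$ and the Hilsum–Skandalis perturbation; once the uniform invertibility is in hand, each homotopy is routine and assembling them into a single path in $K_0(C^*_{L,0}(\widetilde N)^\Gamma)$ is bookkeeping. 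A secondary subtlety is keeping track of the degree shift $L^2(\wedge^{*+1}(\widetilde N))$ versus $L^2(\wedge^*(\widetilde N))$ and the signs $i^{p(p-1)+l}$, but these are cosmetic and do not affect the $K$-theory class.
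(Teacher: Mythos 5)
Your proposal is correct and follows essentially the same route as the paper's proof: the conjugation by $W(t)=\sqrt{\widetilde{S}\widetilde{G}(t)}\,\widetilde{G}(t)$ exactly as in Lemma \ref{lemma equivalent def of F op}, the deformation of the parameters $t$, $\alpha$, $\beta$ through analytically controlled Hilbert Poincar\'e complexes (which is what keeps $D_{\alpha,\beta}\pm S_{\alpha,\beta}$ invertible, the point you flag as the main obstacle), and the identification of the tail with $F_{C^\infty}$ via Lemmas \ref{lemma equivalent def of F op} and \ref{lemma equiv of F smooth and op}. The only difference is organizational: the paper first writes the fully deformed representative at the level of $\widetilde{G}_{\alpha,\beta}(t)$, $\widetilde{S}_{\alpha,\beta}(t)$ and then conjugates once, whereas you conjugate first and then deform the differences of positive projections; this does not change the substance.
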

\begin{proof}

Let $\widetilde{S}_{\alpha,\beta}(t)$ be
\[
\left\{
\begin{array}{cc}
	S_{\alpha, \beta } & t\in [0,1],\\
	S_{(2-t)\alpha, \beta} & t\in [1,2],\\
	S_{0, (3-t)\beta}  & t\in [2,3],\\
	\tau & t\in [3, \infty).
\end{array}
\right.
\]
Set $g_{t}(x)= \frac{x}{\sqrt{x^2+t^2}}$, $h_t(x)=\sqrt{1-g_t^2(x)}$.
 We then  consider operators
 \[
 \widetilde{G}_{\alpha, \beta}(t)=\left\{
\begin{array}{cc}
	g_t(D_{\alpha, \beta}) -\widetilde{S}_{\alpha,\beta}(t) h_t(D_{\alpha, \beta}) & t\in [0,1],\\
	g_1 (D_{(2-t)\alpha, \beta}  ) - \widetilde{S}_{(2-t)\alpha,\beta}(t)h_1(D_{(2-t)\alpha, \beta}) & t\in [1,2],\\
	g_1 (D_{0, (3-t)\beta} ) -  \widetilde{S}_{\alpha,(3-t)\beta}(t) h_1(D_{0, (3-t)\beta})  & t\in [2,3],\\
	g_1 (\frac{D}{t-2})-\tau h_1(\frac{D}{t-2}) & t\in [3, \infty).
\end{array}
\right.
 \]

By the construction of $[\rho_{op}(f)]$, we know that $\text{Ind}_L[\rho_{op}(f)]$, which is defined in Subsection \ref{sec def of rho op}, equals the class
\[
[P_+(- \widetilde{G}_{\alpha, \beta}\widetilde{S}_{\alpha, \beta} \widetilde{G}_{\alpha, \beta})]-[P_+( \widetilde{S}_{\alpha, \beta})].
\]

We first consider $t\in [0, 3]$. Note that
\[(\widetilde{G}_{\alpha, \beta}(t) - \widetilde{S}_{\alpha, \beta}(t)  )^2\]
is invertible for any $t\in [0, 3]$.
Thus
\[I - \widetilde{S}_{\alpha, \beta}(t)\widetilde{G}_{\alpha, \beta}(t) =   \widetilde{S}_{\alpha, \beta}(t) (  \widetilde{S}_{\alpha, \beta}(t) - \widetilde{G}_{\alpha, \beta}(t))\]
is invertible, which implies that $1$ does not belong to the spectrum of $\widetilde{S}_{\alpha, \beta}(t)\widetilde{G}_{\alpha, \beta}(t)$. We form a square root
\[
V(t)= \sqrt{\widetilde{S}_{\alpha, \beta}(t)\widetilde{G}_{\alpha, \beta}(t)}.
\]
Let $W(t)$ be $V(t)\widetilde{G}_{\alpha, \beta}(t) $. Direct computation shows that
\begin{eqnarray*}
W(t)^*\widetilde{S}_{\alpha, \beta}(t)W(t)&=&  \widetilde{G}_{\alpha, \beta}(t),
\\	
W(t)^*\widetilde{G}_{\alpha, \beta}(t)
W(t)&=&  \widetilde{S}_{\alpha, \beta}(t).
\end{eqnarray*}
Thus there is
\begin{eqnarray*}
&&	W(t)^* [P_+(- \widetilde{G}_{\alpha, \beta}(t) \widetilde{S}_{\alpha, \beta}(t) \widetilde{G}_{\alpha, \beta}(t) )]W(t)-W^*(t)[ P_+(\widetilde{S}_{\alpha, \beta}(t))]   W(t)^* \\
	& =& [P_+(-\widetilde{S}_{\alpha, \beta}(t)  \widetilde{G}_{\alpha, \beta}(t) \widetilde{S}_{\alpha, \beta}(t) )]-[P'_+( \widetilde{G}_{\alpha, \beta}(t)) ].
\end{eqnarray*}
The latter formal difference equals the class represented by the path of formal differences
\[
\left\{
\begin{array}{cc}
P_+(\frac{D_{\alpha,  \beta} + tS_{\alpha, \beta}}{|D_{\alpha,  \beta} + tS_{\alpha, \beta}|} )- P_+(\frac{D_{\alpha,  \beta} - tS_{\alpha, \beta}}{|D_{\alpha,  \beta} - tS_{\alpha, \beta}|} ) & t\in [0,1],\\

P_+(\frac{D_{(2-t)\alpha, \beta} +  S_{(2-t)\alpha, \beta}}{|D_{(2-t)\alpha, \beta} +  S_{(2-t)\alpha, \beta}|}) - P_+(\frac{D_{(2-t)\alpha, \beta} -  S_{(2-t)\alpha, \beta}}{|D_{(2-t)\alpha, \beta} - S_{(2-t)\alpha, \beta}|}) & t\in [1,2],\\

P_+(\frac{D_{0, (3-t)\beta} + S_{0, (3-t)\beta}}{|D_{0, (3-t)\beta} +S_{0, (3-t)\beta}|}) ]- [P_+(\frac{D_{0, (3-t)\beta} - S_{0, (3-t)\beta}}{|D_{0, (3-t)\beta} - S_{0, (3-t)\beta}|}) & t\in [2,3].
\end{array}
\right.
\]
Then the lemma follows directly from Lemma \ref{lemma equivalent def of F op} and Lemma  \ref{lemma equiv of F smooth and op}.
\end{proof}

The following lemma follows directly from the proof of \cite[Theorem 5.2]{chenliuyu}.
\begin{lemma}\label{lemma rho trivial of diffeo}
For $I: M \to M$, both $[\rho_{op}(I)]\in K_*(C^*_{L,0}(\widetilde{N})^\Gamma)$ and $[\rho_{C^\infty}(I)]\in K_*(C^*_{L,0}(\widetilde{N})^\Gamma)$ are trivial.
\end{lemma}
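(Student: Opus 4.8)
The plan is to reduce the statement for the identity map $I\colon M\to M$ to a triviality statement about constant (in the simplicial-refinement parameter) paths, using the homotopy invariance machinery of Subsection~\ref{subsec homo inva of the signature class} together with Lemma~\ref{lemma equiv of F PL and smooth} and Lemma~\ref{lemma depicts rho op after comp}. Concretely, for $I\colon M\to M$ the simplicial map $\mathcal{F}$ and the Hilsum--Skandalis operator $\mathcal{T}_{\widetilde{f}}$ are (controlled chain homotopic to) the identity, so in each of the three definitions $\rho_{PL}(I)$, $\rho_{C^\infty}(I)$, and $\operatorname{Ind}_L\rho_{op}(I)$ the ``deformation part'' on $t\in[0,3]$ should be contractible to a constant difference of projections, leaving only the tail $F_{PL}(t-3)$ (resp.\ $F_{C^\infty}$, resp.\ the tail appearing in Lemma~\ref{lemma depicts rho op after comp}). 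The key observation is that this tail represents $[F_{PL,\widetilde{N}}]-[F_{PL,\widetilde{M}}]$ (resp.\ the $C^\infty$ analogue), and for $N=M$ with $\widetilde{f}=\mathrm{id}$ these two classes literally coincide, so the formal difference is zero in $K_0(C^*_L(\widetilde{N})^\Gamma)$; but we need to see it is zero in the finer group $K_0(C^*_{L,0}(\widetilde{N})^\Gamma)$, which is the real content.

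**First I would** invoke \cite[Theorem 5.2]{chenliuyu} as the paper itself suggests: the cited result shows that for a diffeomorphism (in particular the identity) the higher $\rho$ invariant of the signature operator vanishes, essentially because a diffeomorphism carries the local geometry of the signature operator on one side to the other, so the Hilsum--Skandalis perturbation is, up to a controlled homotopy through \emph{invertible} perturbed Poincar\'e complexes with uniformly controlled propagation, connected to the unperturbed signature operator, and the resulting path lifts into $C^*_{L,0}$. I would spell this out in the three models: (i) for $\rho_{op}$, note that when $f=I$ the family $D_{\alpha,\beta}(\widetilde{I})$ and $S_{\alpha,\beta}(\widetilde{I})$ can be deformed, through the parameters $\alpha$ and $\beta$ as in Lemma~\ref{lemma depicts rho op after comp}, all the way to the unperturbed data $(D_{\widetilde{N}\sqcup-\widetilde{N}},\tau)$, and that the resulting class in $K_0(C^*_{L,0}(\widetilde{N})^\Gamma)$ is represented by a path whose $t\in[0,3]$ portion is a constant difference of projections (after the deformation) and whose tail is $F_{C^\infty}$ for $\widetilde{N}\sqcup-\widetilde{N}$; (ii) for $\rho_{PL}$ and $\rho_{C^\infty}$ the same holds with $\mathcal{F}$, $\mathcal{T}_{\widetilde{I}}$ replaced by (chain maps chain-homotopic to) the identity via the controlled homotopy equivalence machinery of Subsection~\ref{subsec homo inva of the signature class}.

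**The heart of the argument** is then the following: on the disjoint union $\widetilde{N}\sqcup(-\widetilde{N})$ with the diagonal identification, the path of differences of projections defining $F_{PL}$ (resp.\ $F_{C^\infty}$) is \emph{not} merely null-homotopic in $C^*_L(\widetilde{N})^\Gamma$ but, because both summands carry literally the \emph{same} family of refinements/metrics, the two paths $F_{PL,\widetilde{N}}$ and $F_{PL,\widetilde{N}}$ cancel termwise; the canonical ``flip'' homotopy swapping the two copies realizes this cancellation through paths that are constant at $t=0$ (since the signature class $\mathrm{Signature}_{PL}(\widetilde{N})-\mathrm{Signature}_{PL}(\widetilde{N})$ is already zero \emph{as an operator difference}, not just in $K$-theory), hence the homotopy stays inside $C^*_{L,0}(\widetilde{N})^\Gamma$. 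Combining with part of the above, $[\rho_{PL}(I)]=0$ and $[\rho_{C^\infty}(I)]=0$ in $K_0(C^*_{L,0}(\widetilde{N})^\Gamma)$, and the odd-dimensional / $K_1$ case is parallel by the reduction noted in the introduction.

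**The main obstacle** I anticipate is bookkeeping the $C^*_{L,0}$ (rather than $C^*_L$) refinement carefully: all the homotopies invoked above are easily seen to live in $C^*_L(\widetilde{N})^\Gamma$, but to land in the \emph{kernel of evaluation at $0$} one must check at every stage that the homotopy is \emph{stationary at} $t=0$, i.e.\ that the deformations of $D_{\alpha,\beta}$, $S_{\alpha,\beta}$ (and of the simplicial/deRham chain homotopies) can be arranged not to move the $t=0$ endpoint, or equivalently that the evaluation-at-$0$ of each representative is a fixed difference of projections independent of the homotopy parameter. This is exactly where one needs the precise form of \cite[Theorem 5.2]{chenliuyu} and the explicit constant nature of the deformations in Lemma~\ref{lemma depicts rho op after comp}; everything else is a routine manipulation of the functional-calculus formulas in Lemmas~\ref{lemma odd functional cal for perturb} and \ref{lemma even functional cal for perturb}.
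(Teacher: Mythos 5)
Your proposal follows essentially the same route as the paper: the paper's entire proof of this lemma is the one-line observation that it follows from the proof of \cite[Theorem 5.2]{chenliuyu}, which is precisely the citation you invoke as your first and decisive step, and your remaining remarks (deforming $\mathcal{T}_{\widetilde I}$, $D_{\alpha,\beta}$, $S_{\alpha,\beta}$ back to the unperturbed data and cancelling the two identical tails on $\widetilde N\sqcup(-\widetilde N)$) are an elaboration of what that proof supplies. One caveat on your ``heart of the argument'': the claim that the signature difference for the doubled complex vanishes \emph{as an operator difference} is not literally true --- $P_+(\overline D+\overline S)=P_+(B+S)\oplus P_+(B-S)$ and $P_+(\overline D-\overline S)=P_+(B-S)\oplus P_+(B+S)$ are interchanged by the flip of the two copies, not equal --- so the termwise cancellation only happens after conjugating/rotating by the flip; the membership in $C^*_{L,0}(\widetilde N)^\Gamma$ should instead be checked from the fact that the $t=0$ value of the $\rho$ path is $P[\cdot](3)-Q[\cdot](3)=0$ and that the rotation homotopy can be carried out through paths with this property, which is exactly the bookkeeping the cited proof of \cite[Theorem 5.2]{chenliuyu} performs.
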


Consider $  I : M \to M $ and $f: M \to N  $. For convenience, we will denote $f_* [\rho_{op}(I)]\in K_1(D^*(\widetilde{N})^\Gamma)$ merely by $ [\rho_{op}(I)] $, and regard $\rho_{op}(I)$ as an element in $ D^*(\widetilde{N})^\Gamma $.

 Let $L^2(\wedge^* (I: \widetilde M\to \widetilde M))$ be $L^2(\wedge^{*+1} (\widetilde{M} ))\oplus L^2(\wedge^* (\widetilde{M} )),$ and $L^2(\wedge^* (\widetilde f: \widetilde M\to\widetilde N))$ be $L^2(\wedge^{*+1} (\widetilde{N} ))\oplus L^2(\wedge^* (\widetilde{M}) ).$ Then
\[ (L^2(\wedge^* (I: \widetilde M\to \widetilde M)) , d_{\alpha, \beta}(I),  S_{\alpha, \beta}(I) )\] and
\[ (L^2(\wedge^* (\widetilde f: \widetilde M\to\widetilde N)) , d_{\alpha,\beta}(\widetilde{f}),  S_{\alpha, \beta}(\widetilde{f})) \]
are homotopy equivalent analytically controlled Hilbert Poincar\'e complexes, whereas the homotopy equivalence can be given by $\widetilde f \sqcup I$. Following the argument in \cite{W13Higherrho-invariantsandthesurgerystructureset}, one can directly show that there is the corresponding $\mathcal{T}_{(\widetilde f \sqcup I)} $. By the theory of the analytically controlled homotopy equivalence of analytically controlled Hilbert Poincar\'e complexes, we have
\begin{lemma}\label{lemma acpc path for I plus f 1}
Set $S_{\alpha, \beta , I\sqcup \widetilde f}(t)$ as
\[
\begin{pmatrix}
(1-t) S_{\alpha, \beta}(\widetilde f) + t\mathcal{T}_{(\widetilde f \sqcup I)}^* S_{\alpha, \beta}(I)\mathcal{T}_{(\widetilde f \sqcup I)}  & 0 \\
0   &   - S_{\alpha, \beta}(I)
\end{pmatrix}
\]
when $t\in [0,1],$ as
\[
\begin{pmatrix}
\cos((t-1)\frac{\pi}{2} ) \mathcal{T}_{(\widetilde f \sqcup I)}^* S_{\alpha, \beta}(I)\mathcal{T}_{(\widetilde f \sqcup I)}  & \sin((t-1)\frac{\pi}{2} ) \mathcal{T}_{(\widetilde f \sqcup I)}^*S_{\alpha, \beta}(I) \\
\sin((t-1)\frac{\pi}{2} ) S_{\alpha, \beta}(I)\mathcal{T}_{(\widetilde f \sqcup I)} &   -\cos((t-1)\frac{\pi}{2} ) S_{\alpha, \beta}(I)
\end{pmatrix}\]
for $t\in [1,2]$, and as
\[
\begin{pmatrix}
0  & e^{i(t-2)\pi}  \mathcal{T}_{(\widetilde f \sqcup I)}^* S_{\alpha, \beta}(I) \\
e^{-i(t-2)\pi} S_{\alpha, \beta}(I)\mathcal{T}_{(\widetilde f \sqcup I)}  &  0
\end{pmatrix}
\]
as $t\in [2,3]$. Then for any $t\in [0,3]$,
\[
( L^2(\wedge^* (I: \widetilde M\to \widetilde M))\oplus L^2(\wedge^* (\widetilde f: \widetilde M\to\widetilde N) ) ,  d_{\alpha,\beta}(I)\oplus  d_{\alpha,\beta}(\widetilde f), S_{\alpha, \beta }(t))
\]
is a $\Gamma$-equivariant analytically controlled Hilbert Poincar\'e complex over $\widetilde{N}$.
\end{lemma}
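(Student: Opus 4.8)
The strategy is to read the statement as an instance of the interpolation construction recalled in Subsection \ref{subsec homo inva of the signature class}, applied with the abstract analytically controlled chain homotopy equivalence $A$ there replaced by $\mathcal{T}_{(\widetilde f\sqcup I)}$. Concretely, I would put $(H,d,S)=(L^2(\wedge^*(\widetilde f:\widetilde M\to\widetilde N)),\,d_{\alpha,\beta}(\widetilde f),\,S_{\alpha,\beta}(\widetilde f))$ and $(H',d',S')=(L^2(\wedge^*(I:\widetilde M\to\widetilde M)),\,d_{\alpha,\beta}(I),\,S_{\alpha,\beta}(I))$, and take $A=\mathcal{T}_{(\widetilde f\sqcup I)}\colon (H,d)\to(H',d')$, so that $\overline{d}=d_{\alpha,\beta}(I)\oplus d_{\alpha,\beta}(\widetilde f)$ plays the role of $\overline d$ and $S_{\alpha,\beta,I\sqcup\widetilde f}(t)$ plays the role of the interpolating family of Poincar\'e operators.

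First I would assemble the inputs. As recorded at the end of Subsection \ref{sec def of rho op}, for each admissible pair $(\alpha,\beta)$ the Hilsum--Skandalis perturbation turns both triples $(H,d,S)$ and $(H',d',S')$ into genuine $\Gamma$-equivariant analytically controlled Hilbert Poincar\'e complexes over $\widetilde N$; and, following Wahl's argument \cite{W13Higherrho-invariantsandthesurgerystructureset} as indicated just before the statement, $\mathcal{T}_{(\widetilde f\sqcup I)}$ is a $\Gamma$-equivariant analytically controlled chain homotopy equivalence of Hilbert Poincar\'e complexes from $(H,d,S)$ to $(H',d',S')$ in the sense of Definitions \ref{def ana control homotopy of hil complex bounded} and \ref{def ana control homotopy of hil poinc complex bounded}; in particular $A$ and a chain inverse lie in $D^*(\widetilde N)^\Gamma$, and there is a degree $-1$ operator $y\in D^*(\widetilde N)^\Gamma$ implementing the Poincar\'e-duality homotopy identity relating $A^*S'A$ and $S$. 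With these data in hand, the three-segment path of self-adjoint operators built in Subsection \ref{subsec homo inva of the signature class} --- the diagonal operator $((1-t)S+tA^*S'A)\oplus(-S')$ for $t\in[0,1]$, the rotation matrix with entries $\cos(\frac{\pi}{2}(t-1))$ and $\sin(\frac{\pi}{2}(t-1))$ for $t\in[1,2]$, and the phased off-diagonal matrix with entries $e^{\pm i\pi(t-2)}$ for $t\in[2,3]$ --- is, after the evident identification of the two summands, exactly the path $S_{\alpha,\beta,I\sqcup\widetilde f}(t)$ in the statement. The ``Direct computation'' carried out in that subsection shows precisely that for every $t$ each such operator, together with $\overline d$ and the ambient pair of modules, satisfies conditions (1)--(3) of Definition \ref{def hilbert Poincare complex bounded}: conditions (1) and (2) are immediate from the block form and the anticommutation of $S_{\alpha,\beta}(\widetilde f)$ with $d_{\alpha,\beta}(\widetilde f)$ (resp. $S_{\alpha,\beta}(I)$ with $d_{\alpha,\beta}(I)$), while (3) is exactly what the interpolation was designed to produce, using Lemma \ref{lemma odd functional cal for perturb}, Lemma \ref{lemma even functional cal for perturb}, and the operator $y$ above. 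This yields the lemma.

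The step I expect to be the real work --- hidden behind the phrase ``there is the corresponding $\mathcal{T}_{(\widetilde f\sqcup I)}$'' --- is not the block-matrix interpolation, which is verbatim Subsection \ref{subsec homo inva of the signature class}, but the verification that $\mathcal{T}_{(\widetilde f\sqcup I)}$ has all the controlled properties at the level of the \emph{perturbed} differentials $d_{\alpha,\beta}(\cdot)$ and perturbed dualities $S_{\alpha,\beta}(\cdot)$, rather than the unperturbed $d$'s and $\tau$'s. This means running through Wahl's embedding/perturbation construction for the disjoint union $\widetilde f\sqcup I$, producing the finite-propagation chain homotopy $y$ with $y'=-\tau y\tau$, and checking that the conjugation by the operators $\mathcal{U}_{\alpha,\beta}$ used to define $d_{\alpha,\beta}(\cdot)$ and $S_{\alpha,\beta}(\cdot)$ preserves membership in $D^*(\widetilde N)^\Gamma$ and finite propagation. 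Once that is in place, the remainder is the routine bookkeeping already performed in Subsection \ref{subsec homo inva of the signature class} for the unperturbed case.
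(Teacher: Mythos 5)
Your proposal is correct and follows essentially the same route as the paper, which states this lemma without a separate proof precisely because it is the interpolation construction of Subsection \ref{subsec homo inva of the signature class} applied to the analytically controlled homotopy equivalence $\mathcal{T}_{(\widetilde f\sqcup I)}$ of the perturbed complexes, whose existence is delegated to Wahl's argument. You also correctly locate the only nontrivial input, namely that $\mathcal{T}_{(\widetilde f\sqcup I)}$ retains the controlled homotopy-equivalence properties with respect to $d_{\alpha,\beta}(\cdot)$ and $S_{\alpha,\beta}(\cdot)$, which is exactly what the paper invokes from \cite{W13Higherrho-invariantsandthesurgerystructureset}.
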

In the following of this section, we denote $d_{\alpha,\beta}(I)\oplus  d_{\alpha,\beta}(\widetilde f)$ by $d_{\alpha,\beta}$, and $D_{\alpha,\beta}(I)\oplus  D_{\alpha,\beta}(\widetilde f)$ by $D_{\alpha,\beta}$.

Note that
\[
S_{0,0, \widetilde f \sqcup I}(0)=\begin{pmatrix}
\tau_N & & & \\
& -\tau_M & & \\
& & -\tau_M & \\
& & & \tau_M
\end{pmatrix}.
\]

Let
\[
p(t)=\left\{
\begin{array}{cc}
P_+(D_{\alpha, \beta} + tS_{\alpha, \beta ,  \widetilde f \sqcup I }(2) ) & t\in [0,\frac{1}{3}],\\
P_+(D_{\alpha, \beta} + tS_{\alpha, \beta , \widetilde f \sqcup I}(3-3t) )&  t\in [\frac{1}{3},\frac{2}{3}],\\
P_+(D_{\alpha, \beta} + tS_{\alpha, \beta , \widetilde f \sqcup I}(3-3t) )& t\in [\frac{2}{3},1],\\
P_+(D_{(2-t)\alpha, \beta} + S_{(2-t)\alpha, \beta, \widetilde f \sqcup I}(0)) &  t\in [1,2],\\
P_+(D_{0, (3-t)\beta} + S_{0, (3-t)\beta, \widetilde f \sqcup I}(0)) &  t\in [2,3].
\end{array}
\right.
\]
Let
\[
q(t)=
\left\{
\begin{array}{cc}
 P_+ (D_{\alpha, \beta}- tS_{\alpha, \beta , \widetilde f \sqcup I}(3-3t)) & t\in [0,\frac{1}{3}],\\
P_+ (D_{\alpha, \beta} - tS_{\alpha, \beta , \widetilde f \sqcup I}(3-3t))&  t\in [\frac{1}{3},\frac{2}{3}],\\
P_+ (D_{\alpha, \beta} - tS_{\alpha, \beta , \widetilde f \sqcup I}(3-3t))& t\in [\frac{2}{3},1],\\
 P_+ (D_{(2-t)\alpha,  \beta} - S_{(2-t)\alpha, \beta, \widetilde f \sqcup I}(0)) &  t\in [1,2],\\
 P_+ (D_{0, (3-t)\beta} - S_{0, (3-t)\beta, \widetilde f \sqcup I}(0))&  t\in [2,3].
\end{array}
\right.
\]
Then by Lemma \ref{lemma depicts rho op after comp} and Lemma \ref{lemma acpc path for I plus f 1}, the class $\text{Ind}_L[\rho_{op}(f) ] -\text{Ind}_L[\rho_{op}(I) ]\in K_0(C^*_{L,0}(\widetilde{N})^\Gamma)$ can be represented by the path of  differences:
\[
\left\{
\begin{array}{cc}
p(t)-q(t) & t\in [0,3],\\
F_{C^\infty}(t-3) & t\in [3, \infty).
\end{array}
\right.
\]

\begin{corollary}\label{coro depicts rho op of I plus f}
Let
\[
p'(t)=\left\{
\begin{array}{cc}
P_+(D_{\alpha, \beta} + S_{\alpha, \beta , \widetilde f \sqcup I}(2) ) & t\in [0,\frac{1}{3}],\\
P_+(D_{\alpha, \beta} + S_{\alpha, \beta , \widetilde f \sqcup I}(3-3t) )&  t\in [\frac{1}{3},1],\\
P_+(D_{(2-t)\alpha, \beta} +  S_{(2-t)\alpha, \beta , \widetilde f \sqcup I}(0))& t\in [1,2],\\
P_+(D_{0, (3-t)\beta} + S_{0, (3-t)\beta, \widetilde f \sqcup I}(0)) &  t\in [2,3].\\
\end{array}
\right.
\]
Let
\[
q'(t)=\left\{
\begin{array}{cc}
 P_+ (D_{\alpha, \beta}- S_{\alpha, \beta , \widetilde f \sqcup I}(3-3t)) & t\in [0,\frac{1}{3}],\\
 P_+ (D_{\alpha, \beta} - S_{\alpha, \beta , \widetilde f \sqcup I}(3-3t))&  t\in [\frac{1}{3},1],\\
P_+ (D_{(2-t)\alpha,  \beta} - S_{(2-t)\alpha, \beta , \widetilde f \sqcup I}(0)) & t\in [1,2],\\
 P_+ (D_{0, (3-t)\beta} - S_{0, (3-t)\beta, \widetilde f \sqcup I}(0)  &  t\in [2,3].\\
\end{array}
\right.
\]
Then the class $\text{Ind}_L[\rho_{op}(f) ] -\text{Ind}_L[\rho_{op}(I) ]\in K_0(C^*_{L,0}(\widetilde{N})^\Gamma)$ can be represented by path of  differences
\[
\left\{
\begin{array}{cc}
p'(t)-q'(t),& t\in [0,3],\\
F_{C^\infty}(t-3) & t\in [3, \infty).
\end{array}
\right.
\]
\end{corollary}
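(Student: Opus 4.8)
The plan is to deform the path $\{p(t)-q(t)\}_{t\in[0,3]}\cup\{F_{C^\infty}(t-3)\}_{t\in[3,\infty)}$ of the display preceding the corollary --- which by Lemma~\ref{lemma depicts rho op after comp} and Lemma~\ref{lemma acpc path for I plus f 1} represents $\text{Ind}_L[\rho_{op}(f)]-\text{Ind}_L[\rho_{op}(I)]$ --- into the path built from $p'$ and $q'$, through a homotopy of paths of differences of projections representing classes in $K_0(C^*_{L,0}(\widetilde N)^\Gamma)$. Comparing the definitions, the two paths literally coincide on $t\in[1,\infty)$, while on $t\in[0,1]$ they differ only in that the duality operator $S_{\alpha,\beta,\widetilde f\sqcup I}(\cdot)$ is multiplied by the scalar $t$ in $p,q$ and by $1$ in $p',q'$. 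So the homotopy should simply interpolate this scalar linearly.

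Concretely, for $s\in[0,1]$ put $\lambda_s(t)=(1-s)t+s\in[0,1]$ and let $p_s(t)$ (resp. $q_s(t)$) be obtained from the formula for $p(t)$ (resp. $q(t)$) by replacing, on $t\in[0,1]$, each factor $t$ in front of $S_{\alpha,\beta,\widetilde f\sqcup I}(\cdot)$ by $\lambda_s(t)$, and leaving the formula on $t\in[1,\infty)$ unchanged; thus $(p_0,q_0)=(p,q)$ and $(p_1,q_1)=(p',q')$. The first thing to verify is that $p_s(t),q_s(t)$ are genuinely projections in $D^*(\widetilde N)^\Gamma$ with difference in $C^*(\widetilde N)^\Gamma$. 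For $\lambda\in(0,1]$, Lemma~\ref{lemma acpc path for I plus f 1} together with the observation that rescaling the duality of an analytically controlled Hilbert Poincar\'e complex by a nonzero scalar again yields one (all three conditions of Definition~\ref{def hilbert Poincare complex bounded} scale) shows that $D_{\alpha,\beta}\pm\lambda S_{\alpha,\beta,\widetilde f\sqcup I}(r)$ is invertible for every $r\in[0,3]$, while for $\lambda=0$ the operator $D_{\alpha,\beta}=D_{\alpha,\beta}(I)\oplus D_{\alpha,\beta}(\widetilde f)$ is itself invertible by the theory of Hilsum and Skandalis; by compactness of the parameter range these operators have a uniform spectral gap, so $P_+$ of them depends norm-continuously on the parameters and, by the remark following Lemma~\ref{lemma odd functional cal for perturb}, lies in $D^*(\widetilde N)^\Gamma$. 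Applying Lemma~\ref{lemma even functional cal for perturb} to the (rescaled) Poincar\'e complexes gives $P_+(D_{\alpha,\beta}\pm\lambda S_{\alpha,\beta,\widetilde f\sqcup I}(r))-P_+(D_{\alpha,\beta})\in C^*(\widetilde N)^\Gamma$ for $\lambda\in(0,1]$, hence for all $\lambda\in[0,1]$ by closedness of $C^*(\widetilde N)^\Gamma$; subtracting two such differences shows $p_s(t)-q_s(t)\in C^*(\widetilde N)^\Gamma$ for all $s$ and $t$.

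It remains to check that each extended path $\{p_s(t)-q_s(t)\}_{t\in[0,3]}\cup\{F_{C^\infty}(t-3)\}_{t\in[3,\infty)}$ is admissible and that the homotopy fixes the endpoints. At $t=0$ we have $\lambda_s(0)=s$, and since $S_{\alpha,\beta,\widetilde f\sqcup I}(3)=-S_{\alpha,\beta,\widetilde f\sqcup I}(2)$ (the factor $e^{i(t-2)\pi}$ in the $[2,3]$ branch of Lemma~\ref{lemma acpc path for I plus f 1} equals $1$ at $t=2$ and $-1$ at $t=3$), the first branches of $p_s$ and $q_s$ give $p_s(0)=P_+(D_{\alpha,\beta}+sS_{\alpha,\beta,\widetilde f\sqcup I}(2))=q_s(0)$, so $p_s(0)-q_s(0)=0$ for every $s$; hence each extended path vanishes at $t=0$ and so represents a class in $K_0(C^*_{L,0}(\widetilde N)^\Gamma)$ (rather than merely in $K_0(C^*_L(\widetilde N)^\Gamma)$), and its propagation tends to $0$ as $t\to\infty$ by construction of $F_{C^\infty}$. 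At $t=1$ we have $\lambda_s(1)=1$, so $p_s(1)$ and $q_s(1)$ are independent of $s$ and match the value at $t=1$ of the unchanged branch for $t\in[1,2]$, and on $t\in[1,\infty)$ the whole path is independent of $s$; so the pieces glue continuously and the homotopy fixes everything on $[1,\infty)$. Since $s=0$ gives the path representing $\text{Ind}_L[\rho_{op}(f)]-\text{Ind}_L[\rho_{op}(I)]$ and $s=1$ gives the path built from $p'$ and $q'$, the corollary follows. The only delicate point is the degenerate value $\lambda=0$, where the rescaled duality vanishes and the Hilbert Poincar\'e complex formalism does not literally apply; it is handled by using the genuine invertibility of $D_{\alpha,\beta}$ coming from the Hilsum--Skandalis construction together with the closedness of $C^*(\widetilde N)^\Gamma$.
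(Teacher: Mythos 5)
Your argument is correct and is exactly the justification the paper leaves implicit: the corollary is stated without proof as an immediate consequence of the preceding display, the intended reason being that the scalar factor $t$ in front of $S_{\alpha,\beta,\widetilde f\sqcup I}(\cdot)$ on $[0,1]$ can be deformed to $1$ through paths of differences of projections, using invertibility of $D_{\alpha,\beta}\pm\lambda S_{\alpha,\beta,\widetilde f\sqcup I}(r)$ (Hilsum--Skandalis at $\lambda=0$, the rescaled Poincar\'e complexes of Lemma \ref{lemma acpc path for I plus f 1} for $\lambda>0$) and the identity $S_{\alpha,\beta,\widetilde f\sqcup I}(3)=-S_{\alpha,\beta,\widetilde f\sqcup I}(2)$ to keep the value at $t=0$ equal to zero. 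Your checks of the endpoint behavior, the uniform spectral gap, and membership of the differences in $C^*(\widetilde N)^\Gamma$ via Lemmas \ref{lemma odd functional cal for perturb} and \ref{lemma even functional cal for perturb} are precisely what is needed, so the proposal matches the paper's (implicit) approach.
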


We then can prove the main theorem of this section:
\begin{theorem}\label{theo equivalence of rho op and smooth}
With notations as above it holds
\[
\text{Ind}_L[\rho_{op}(f)] = [\rho_{C^\infty} (f) ]\in K_0(C_L^*(\widetilde{N})^\Gamma).
\]
\end{theorem}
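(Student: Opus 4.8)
The plan is to compare the two explicit path representatives that have already been isolated. By definition $\rho_{C^\infty}(f)$ is the concatenation of the path $\bigl(P[\mathcal{T}_{\widetilde f}](3-t)-Q[\mathcal{T}_{\widetilde f}](3-t)\bigr)_{t\in[0,3]}$ with the tail $\bigl(F_{C^\infty}(t-3)\bigr)_{t\in[3,\infty)}$. On the other hand, Lemma \ref{lemma rho trivial of diffeo} gives $\mathrm{Ind}_L[\rho_{op}(I)]=0$, so $\mathrm{Ind}_L[\rho_{op}(f)]=\mathrm{Ind}_L[\rho_{op}(f)]-\mathrm{Ind}_L[\rho_{op}(I)]$, and Corollary \ref{coro depicts rho op of I plus f} represents the latter by the concatenation of $\bigl(p'(t)-q'(t)\bigr)_{t\in[0,3]}$ with exactly the same tail $\bigl(F_{C^\infty}(t-3)\bigr)_{t\in[3,\infty)}$. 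Since the tails agree on the nose, it is enough to construct a homotopy, relative to endpoints and through paths of differences of positive projections whose underlying class lies in $C^*_{L,0}(\widetilde{N})^\Gamma$, between $\bigl(p'(t)-q'(t)\bigr)_{t\in[0,3]}$ and $\bigl(P[\mathcal{T}_{\widetilde f}](3-t)-Q[\mathcal{T}_{\widetilde f}](3-t)\bigr)_{t\in[0,3]}$. First I would record that the endpoints already match: at $t=0$ one has $P[\mathcal{T}_{\widetilde f}](3)=Q[\mathcal{T}_{\widetilde f}](3)$ by Subsection \ref{subsec homo inva of the signature class}, while $p'(0)-q'(0)=0$ because the identity $S_{\alpha,\beta,\widetilde f\sqcup I}(3)=-S_{\alpha,\beta,\widetilde f\sqcup I}(2)$ forces $P_+(D_{\alpha,\beta}+S_{\alpha,\beta,\widetilde f\sqcup I}(2))=P_+(D_{\alpha,\beta}-S_{\alpha,\beta,\widetilde f\sqcup I}(3))$; and at $t=3$ both sides reduce to $F_{C^\infty}(0)$ once one notices that the $L^2(\wedge^*(I\colon\widetilde{M}\to\widetilde{M}))$-summand contributes to $p'(3)-q'(3)$ a pair of identical projections listed in opposite order, that is, a zero formal difference, while the $\widetilde f$-summand contributes precisely $P_+(D_{\widetilde N}\oplus D_{\widetilde M}+\tau)-P_+(D_{\widetilde N}\oplus D_{\widetilde M}-\tau)$.

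For the homotopy I would degenerate the Hilsum--Skandalis parameters $\alpha$ and $\beta$ to $0$ \emph{uniformly in the path parameter} $t$: for $s\in[0,1]$ first replace $\alpha$ by $(1-s)\alpha$, then, in a second stage, replace $\beta$ by $(1-s)\beta$, and re-run the construction of Lemma \ref{lemma acpc path for I plus f 1} and Corollary \ref{coro depicts rho op of I plus f} with these scaled values. For every fixed $s$ the relevant families remain $\Gamma$-equivariant analytically controlled Hilbert Poincar\'e complexes over $\widetilde{N}$, so $D\pm S$ stay invertible and the differences of their positive projections stay in $C^*(\widetilde{N})^\Gamma$ by Lemma \ref{lemma odd functional cal for perturb} and Lemma \ref{lemma even functional cal for perturb}. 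At $s=1$ we have $\alpha=\beta=0$; there $d_{0,0}(\widetilde f)=d_{\widetilde N}\oplus d_{\widetilde M}$, $S_{0,0}(\widetilde f)=\tau$, the operator $\mathcal{T}_{(\widetilde f\sqcup I)}$ reduces to the chain map assembled from $\mathcal{T}_{\widetilde f}$ in the $\widetilde f$-slot together with an identity-type map in the $I$-slot, and the two interpolation blocks on $t\in[1,2]$ and $t\in[2,3]$ collapse to constant paths; a reparametrization of $[0,1]$ onto $[0,3]$ then identifies the resulting path with $\bigl(P[\mathcal{T}_{\widetilde f}](3-t)-Q[\mathcal{T}_{\widetilde f}](3-t)\bigr)_{t\in[0,3]}$ to which a trivial ($I$-summand) zero difference has been added, which is harmless for the $K$-theory class. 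Homotopy invariance of the class in $K_0\bigl(C^*_{L,0}(\widetilde{N})^\Gamma\bigr)$ of such a path then yields $\mathrm{Ind}_L[\rho_{op}(f)]=[\rho_{C^\infty}(f)]$.

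The step I expect to be the main obstacle is establishing enough uniformity to make the two-parameter family $(s,t)$ legitimate. One must check, using the Hilsum--Skandalis invertibility estimates together with Wahl's bounds on $\mathcal{T}_{\widetilde f}$ and $\mathcal{U}_{\alpha,\beta}(\widetilde f)$, that the operators $D_{(1-s)\alpha,\beta}\pm S_{(1-s)\alpha,\beta,\widetilde f\sqcup I}(t)$ and their $\beta$-degenerated analogues are invertible with a lower bound uniform in $(s,t)$ as long as the original $\alpha$ is chosen small; that the normalizing-function calculus producing the projections and their $C^*(\widetilde{N})^\Gamma$-valued differences is norm-continuous in $(s,t)$; and that, as the auxiliary time variable of $C^*_L(\widetilde{N})^\Gamma$ tends to infinity, the propagation of the whole family tends to $0$, so that the homotopy genuinely takes place inside $C^*_{L,0}(\widetilde{N})^\Gamma$ and not merely inside $C^*_L(\widetilde{N})^\Gamma$. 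Granting these estimates---which amount to re-running the Higson--Roe homotopy-invariance argument of Subsection \ref{subsec homo inva of the signature class} and the Hilsum--Skandalis/Wahl construction with one extra parameter---the remainder is bookkeeping.
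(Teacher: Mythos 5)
Your argument is essentially the paper's own proof: the paper likewise reduces to $\mathrm{Ind}_L[\rho_{op}(f)]-\mathrm{Ind}_L[\rho_{op}(I)]$ via Lemma \ref{lemma rho trivial of diffeo} and Corollary \ref{coro depicts rho op of I plus f}, and then identifies the $\alpha=\beta=0$ degeneration of that representative with the $\rho_{C^\infty}$ path (the step the paper compresses into ``for $\alpha=\beta=0$, one can directly see\ldots'', which your two-parameter homotopy in $(s,t)$ simply makes explicit). The only imprecision is that at $s=1$ the $I$-summand is not a literally zero difference but a representative of the trivial class $\pm[\rho_{C^\infty}(I)]$, so discarding it uses Lemma \ref{lemma rho trivial of diffeo} (which you do cite) rather than being automatic.
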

\begin{proof}
For $\alpha=\beta=0$, one can directly see that the representative elements in Corollary \ref{coro depicts rho op of I plus f} represents the class
\[
[\rho_{C^\infty} (f)] - [\rho_{C^\infty}(I)].
\]
Thus we have
\begin{eqnarray*}
&&\text{Ind}_L [\rho_{op}(f)]\\
&=&\text{Ind}_L [\rho_{op}(f)]-\text{Ind}_L[\rho_{op}(I)] \\
&=& [\rho_{C^\infty} (f)] - [\rho_{C^\infty}(I)]\\
&=&[\rho_{C^\infty} (f)].
\end{eqnarray*}
\end{proof}

\end{document}